\documentclass[10pt,twoside,twocolumn]{IEEEtran}
\usepackage{amsmath,graphicx}
\graphicspath{{./img/}}
\pdfoutput=1

\usepackage{enumitem}

\usepackage{algorithm} 
\usepackage{algorithmic}

\usepackage{amssymb,subfigure} 
\usepackage{amsfonts}
\usepackage{amsthm}
\usepackage{booktabs}
\usepackage{url}
\usepackage{mathtools}
\usepackage{cite}


\newcommand{\eg}{{\it e.g.}}  
\newcommand{\ie}{{\it i.e.}}
\newcommand{\ea}{{\it et al.}}

\newcommand{\ones}{\mathbf 1} 
\newcommand{\reals}{{\mathbb R}}

\newcommand{\naturals}{\mathbb{N}}
\newcommand{\expect}{{\mathbb E}}
\newcommand{\prob}{{\mathbb P}}

\newcommand{\argmin}{\operatornamewithlimits{argmin}}

\newcommand{\minimize}{\operatornamewithlimits{minimize}}

\newcommand{\st}{\operatornamewithlimits{\text{subject to}}}

\newcommand {\mtxnorm}[1]{\left|\mathopen{}\left|\mathopen{}\left|
        {#1} \right|\mathclose{}\right|\mathclose{}\right|}

\newtheorem{theorem}{Theorem} 
\newtheorem{lemma}[theorem]{Lemma}
\newtheorem{proposition}[theorem]{Proposition}
\newtheorem{assumption}[theorem]{Assumption}
\newtheorem{definition}[theorem]{Definition}
\title{Dealing with bad apples: Robust range-based network
  localization via distributed relaxation methods}
%

\author{Cl\'audia Soares*,~\IEEEmembership{Member,~IEEE,} and Jo\~ao
  Gomes,~\IEEEmembership{Member,~IEEE} \thanks{This research was
    partially supported by Funda\c{c}\~{a}o para a Ci\^{e}ncia e a
    Tecnologia (project UID/EEA/50009/2013) and EU-H2020 project
    WiMUST (grant agreement No. 645141) The authors are with the
    Institute for Systems and Robotics (ISR), Instituto Superior
    T\'ecnico, Universidade de Lisboa, 1049-001 Lisboa, Portugal
    (e-mail: \{csoares,jpg\}@isr.tecnico.ulisboa.pt).}}
%
%
%

\begin{document}
%
\maketitle
\begin{abstract}
  Real-world network applications must cope with failing nodes,
  malicious attacks, or, somehow, nodes facing corrupted data ---
  classified as outliers. One enabling application is the geographic
  localization of the network nodes. However, despite excellent work
  on the network localization problem, prior research seldom
  considered outlier data --- even now, when already deployed networks
  cry out for robust procedures. We propose robust, fast, and
  distributed network localization algorithms, resilient to high-power
  noise, but also precise under regular Gaussian noise. We use the
  Huber M-estimator as a difference measure between the distance of
  estimated nodes and noisy range measurements, thus obtaining a
  robust (but nonconvex) optimization problem. We then devise a convex
  underestimator solvable in polynomial time, and tight in the
  inter-node terms. We also provide an optimality bound for the convex
  underestimator. We put forward a new representation of the Huber
  function composed with a norm, enabling distributed robust
  localization algorithms to minimize the proposed underestimator. The
  synchronous distributed method has optimal convergence rate and the
  asynchronous one converges in finite time, for a given
  precision. The main highlight of our contribution lies on the fact
  that we pay no price for distributed computation nor in accuracy,
  nor in communication cost or convergence speed. Simulations show the
  advantage of using our proposed algorithms, both in the presence of
  outliers and under regular Gaussian noise: our method exceeds the
  accuracy of an alternative robust approach based on L1 norms by at
  least 100m in an area of 1Km sides.
\end{abstract}

\begin{IEEEkeywords}
  Distributed algorithms, Robust estimation, Huber function, convex
  relaxations, nonconvex optimization, maximum likelihood estimation,
  distributed iterative network localization, sensor networks.
\end{IEEEkeywords}

\begin{center} \bfseries EDICS Category: OPT-CVXR OPT-DOPT NEG-APPL
  NEG-LOCL \end{center}
\IEEEpeerreviewmaketitle

\section{Introduction}
\label{sec:intro}
Outliers can cause large  errors in non robust estimation algorithms,
and, if other systems use wrong estimates as input, error propagation
can invalidate the the engineered system's final purpose. Network
localization is a key component in many network-centric systems that
is prone to such catastrophic error propagation. 
It might be taken for granted in most sensor network applications, but
in challenging environments network localization is an open and very
active research field. We present a new approach addressing the
presence of outliers, not by eliminating them from the estimation
process, but by weighting them, so they can contribute to the
solution, while mitigating the outlier bias on the estimator.

\subsection{The problem}
\label{sec:problem}

The network is represented as an undirected graph~$\mathcal{G} =
(\mathcal{V},\mathcal{E})$. We represent the set of sensors with
unknown positions as~$\mathcal{V} = \{1,2, \dots, n\}$. There is an
edge $i \sim j \in {\mathcal E}$ between nodes $i$ and $j$ if a range
measurement between~$i$ and $j$ is available and~$i$ and $j$ can
communicate with each other.  Anchors have known positions and are
collected in the set ${\mathcal A} = \{ 1, \ldots, m \}$; they are not
nodes on the graph~$\mathcal{G}$. For each sensor $i \in {\mathcal
  V}$, we let ${\mathcal A}_i \subset {\mathcal A}$ be the subset of
anchors with measured range to node~$i$. The set~$N_{i}$ collects the
neighbor sensor nodes of node~$i$.

The element positions belong to~$\reals^p$ with~$p=2$ for planar
networks, and $p=3$ for volumetric ones.  We denote by $x_i \in \reals^p$ the
position of sensor $i$, and by $d_{ij}$ the range measurement between
sensors $i$ and $j$. Anchor positions
are denoted by $a_{k} \in \reals^{p}$. We let $r_{ik}$ denote the
noisy range measurement between sensor $i$ and anchor $k$.

We aim at estimating the sensor positions~$x=\{x_{\mathcal{V}}\}$,
taking into account two types of noise: (1) regular Gaussian noise,
and (2) outlier induced noise.

\subsection{Related work}
\label{sec:related-work}

Focusing on recent work, several different approaches are available,
some performing semi-definite or weaker second-order cone relaxations
of the original nonconvex problem like O\u{g}uz-Ekim
\ea~\cite{OguzGomesXavierOliveira2011} or Biswas
\ea~\cite{BiswasLiangTohYeWang2006}. These approaches do not scale
well, since the centralized semidefinite program (SDP) or second-order
cone program (SOCP) gets very large even for a small number of nodes.
In~O\u{g}uz-Ekim \ea\ the Majorization-Minimization (MM) framework was
used with quadratic cost functions to also derive centralized
approaches to the sensor network localization problem.  Other
approaches rely on multidimensional scaling, where the sensor network
localization problem is posed as a least-squares problem, as in Shang
\ea~\cite{ShangRumiZhangFromherz2004}. Unfortunately, multidimensional
scaling is unreliable in large-scale networks with sparse
connectivity. Also relying on the well-tested weighted least-squares
approach, the work of Destino and Abreu~\cite{DestinoAbreu2011}
performs successive minimizations of a weighted least-squares cost
function convolved with a Gaussian kernel of decreasing variance,
following an homotopy scheme.  Another class of relaxations are convex
envelopes of terms in the cost function, like Soares
\ea~\cite{soares2014simple}.

Some previous references directly tackle the nonconvex maximum
likelihood problem, aspiring to no more than a local minimizer, whose
goodness depends on the quality of the initialization. Here we can
point out several approaches, like Costa \textit{et
  al.}~\cite{CostaPatwariHero2006}, where the authors present a
distributed refinement solution inspired in multidimensional scaling,
or Calafiore \textit{et al.}~\cite{CalafioreCarloneWei2010},
presenting a gradient algorithm with Barzilai-Borwein step sizes
calculated in a first consensus phase at every algorithm step, and
Soares \textit{et al.}~\cite{SoaresXavierGomes2014a}, where the
authors reformulate the problem to obtain a Lipschitz gradient cost;
shifting to this cost function enables a MM approach based on quadratic upper bounds that decouple across
nodes; the resulting algorithm is distributed, with all nodes working
in parallel.

All these approaches assume Gaussian noise contaminating the distance
measurements or their squares, while many empirical studies reveal
that real data are seldom Gaussian. Despite this, the literature
is scarce in robust estimation techniques for network
localization. Some of the few approaches rely on identifying outliers
from regular data and discarding them. An example is Ihler \textit{et
  al.}~\cite{IhlerFisherMosesWillsky2005}, which formulates network
localization as an inference problem in a graphical model. To
approximate an outlier process the authors add a high-variance
Gaussian to the Gaussian mixtures and employ nonparametric belief
propagation to approximate the solution. The authors assume a
particular probability distribution for outlier measurements. In the
same vein, Ash \textit{et al.}~\cite{AshMoses2005} employs the EM
algorithm to jointly estimate outliers and sensor positions. Recently,
the work of Yin \ea ~\cite{YinZoubirFritscheGustafsson2013} tackled
robust localization with estimation of positions, mixture parameters,
and outlier noise model for unknown propagation conditions, again
under predetermined probability distributions. By removing guessed
outliers from the estimation process some information is lost.

Alternatively, methods may perform a soft rejection of outliers, still
allowing them to contribute to the solution.  O\u{g}uz-Ekim
\ea~\cite{OguzGomesXavierOliveira2011} derived a maximum likelihood
estimator for Laplacian noise and relaxed it to a convex program by
linearizing and dropping a rank constraint; they also proposed a
centralized algorithm to solve the approximated problem. Such
centralized solutions fail to scale with the number of nodes and
number of collected measurements. Forero and
Giannakis~\cite{ForeroGiannakis2012} presented a robust
multidimensional scaling based on regularized least-squares, where the
regularization term was surrogated by a convex function, and solved
via MM. The main drawbacks of this approach are the centralized
processing architecture and selection of a sensitive regularization
parameter. Korkmaz and van der Veen~\cite{KorkmazVeen2009} use the
Huber loss~\cite{huber1964} composed with a discrepancy between
measurements and estimated distances, in order to achieve robustness to
outliers. The resulting cost is nonconvex, and optimized by means of
the Majorization-Minimization technique. The method is distributed,
but the quality of the solution depends in the quality of the
initialization.
Yousefi \ea~\cite{YousefiChangChampagne2014} extends the Projection
Onto Convex Sets approach in Blatt and Hero~\cite{BlattHero2006} to
the Huber loss. The approximation is then solved via a coordinate
descent algorithm, where a ``one-at-a-time'' (sequential) update
scheme is critical for convergence; so this solution depends on 
knowledge of a Hamiltonian path in the network --- a known NP-complete
problem.

\subsection{Contributions}
\label{sec:contributions}

In applications of large-scale networks there is a need for a
distributed localization method for soft rejection of outliers that is
simple, scalable and efficient under any outlier noise
distribution. The method we present incorporates outliers into the
estimation process and does not assume any statistical outlier model. We
capitalize on the robust estimation properties of the Huber function
but, unlike Korkmaz and van der Veen, we do not address the nonconvex
cost in our proposal, thus removing the initialization
uncertainty. Instead, we derive a convex relaxation which numerically
outperforms state-of-the-art methods, and other natural formulations of
the problem. The contributions of this work are:

\begin{enumerate}
\item We motivate a tight convex underestimator for each term of the
  robust discrepancy measure for sensor network localization
  (Section~\ref{sec:conv-under});
\item We provide an optimality bound for the convex relaxation, and we
  analyze the tightness of the convex approximation. We also compare
  it with other discrepancy measures and appropriate relaxations. All
  measurements contribute to the estimate, although we do not compute
  specific weights. Numerical simulations illustrate the quality of
  the convex underestimator
  (Section~\ref{sec:appr-qual-cvx-underestimator});
\item We put forth a new representation of the Huber function
  composed with a norm (Section~\ref{sec:repr-huber-comp});
\item Capitalizing on the previous contributions, we develop a
  gradient method which is distributed, requires only simple
  computations at each node, and has guaranteed optimal convergence rate
  (Sections~\ref{sec:synchr-algor},
  and~\ref{sec:analysis-synchr-algor});
\item Further, we introduce an asynchronous method for robust network
  localization, with convergence guarantees
  (Sections~\ref{sec:asynchr-algor},
  and~\ref{sec:analysis-asynchr-algor});
\item We benchmark our algorithms with the state-of-the-art in robust
  network localization, achieving better performance with fewer
  communications (Section~\ref{sec:numer-exper}).
\end{enumerate}
Both solutions proposed in this paper do not assume knowledge of a
Hamiltonian path in the network. Also, the proposed scheme has the
fastest possible convergence for a first order method, in the
synchronous case, without degradation of accuracy. This is possible
after analyzing the novel representation of the robust problem, as
described in the sequel, and uncovering that the problem is, in fact,
naturally distributed.

\section{Discrepancy measure}
\label{sec:discrepancy-measure}

The maximum-likelihood estimator for  sensor positions with
additive i.i.d.\ Gaussian noise contaminating range measurements is the
solution of the optimization problem
\begin{equation*}
  \label{eq:snlOptimizationProblem} 
  \minimize_{x} g_{G}(x),
\end{equation*} 
where
\begin{align*} 
\label{eq:sqdist}
  g_{G}(x) = & \sum _{i \sim j} \frac{1}{2}(\|x_{i} - x_{j}\| -
  d_{ij})^2 + \\
 &\sum_{i} \sum_{k \in \mathcal{A}_{i}} \frac{1}{2}(\|x_{i}-a_{k}\| -
 r_{ik})^2.
\end{align*} 
However, outlier measurements are non-Gaussian and will heavily bias
the solutions of the optimization problem since their magnitude will
be amplified by the squares~$h_{Q}(t) = t^{2}$ in each outlier term.
Robust estimation theory provides some alternatives to perform soft
rejection of outliers, namely, using the~$L_{1}$
loss~$h_{|\cdot|}(t) = |t|$ or the Huber loss
\begin{equation}
\label{eq:huber-loss}
h_{R}(t) =
\begin{cases}
  t^{2} & \text{if } |t| \leq R,\\
  2R|t|-R^{2} & \text{if } |t| \geq R.
\end{cases}
\end{equation}
The Huber loss achieves the best of two worlds: it is robust for large values
of the argument --- like the~$L_{1}$ loss --- and for reasonable noise
levels it behaves like~$g_{Q}$, thus leading to the maximum-likelihood
estimator adapted to regular noise.
\begin{figure}[tb]
  \centering
  \includegraphics[width=\columnwidth]{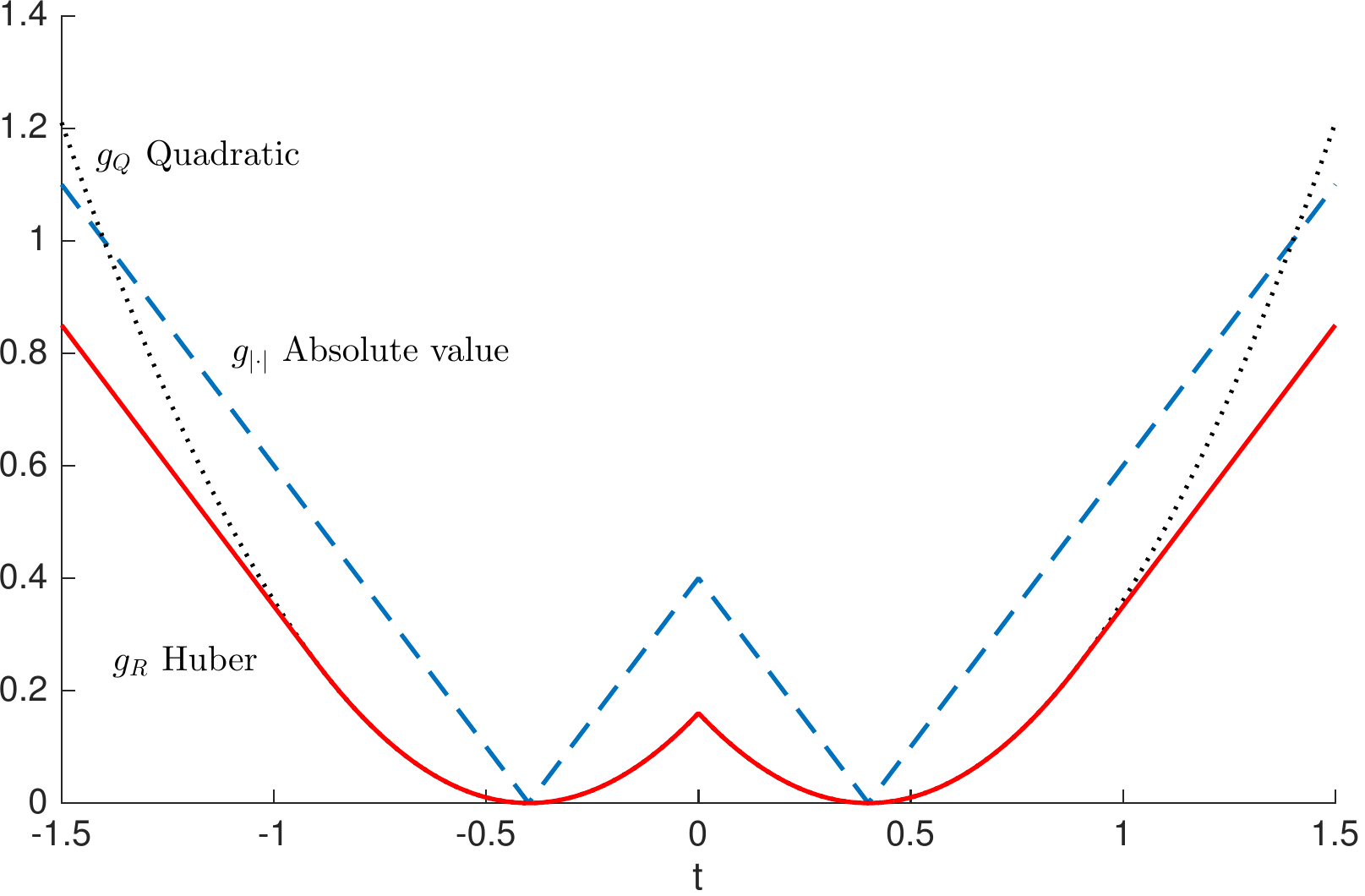}
  \caption{The different cost functions considered in this paper,
    applied to a 1D, one-edge problem, where an anchor sits at the
    origin, and the sensor at 0.4. The maximum-likelihood independent
    white Gaussian noise is~$g_{Q}(t) = (|t| - d)^{2}$ and shows the
    steepest tails, which act as outlier amplifiers; the $L_{1}$
    loss~$g_{|\cdot|}(t) = ||t| - d|$, associated with impulsive
    noise, fails to model the Gaussianity of regular operating noise;
    and, finally, the Huber loss~$g_{R}(t) = h_{R}(|t| - d)$, combines
    robustness to high-power outliers and adaptation to medium-power
    Gaussian noise.}
  \label{fig:nonconvex}
\end{figure}
Figure~\ref{fig:nonconvex} depicts a one-dimensional example of these different
costs. We can observe in this simple example the main properties of
the different cost functions, in terms of adaptation to low/medium-power
Gaussian noise and high-power outlier spikes.
Using~\eqref{eq:huber-loss} we can write our robust optimization problem as
\begin{equation}
  \label{eq:snlOptProb}
  \minimize_{x} g_{R}(x)
\end{equation}
where
\begin{align}
  \nonumber
    g_{R}(x) = & \sum _{i \sim j} \frac 12 h_{R_{ij}}(\|x_{i} - x_{j}\| - d_{ij})
    \; + \\ \label{eq:huberDist}
    &\sum_{i} \sum_{k \in \mathcal{A}_{i}} \frac 12 h_{R_{ik}}(\|x_{i}-a_{k}\| - r_{ik}).
\end{align}
This function is nonconvex and, in general, difficult to minimize.
We shall provide a convex underestimator that tightly bounds each term
of~\eqref{eq:huberDist}, thus leading to better estimation results
than other relaxations which are not tight~\cite{SimonettoLeus2014}.

\section{Convex underestimator}
\label{sec:conv-under}
To convexify~$g_{R}$ we can replace each term by its
convex hull\footnote{The convex
  hull of a function~$\gamma$, \ie, its best
  possible convex underestimator, is defined as $ \text{conv } \gamma(x) =
  \sup\left \{ \eta(x) \; : \; \eta \leq \gamma, \; \eta \text{ is
      convex} \right \} $. It is hard to determine in general~\cite{UrrutyMarechal1993}.},
\begin{figure}[tb]
  \centering
  \includegraphics[width=\columnwidth]{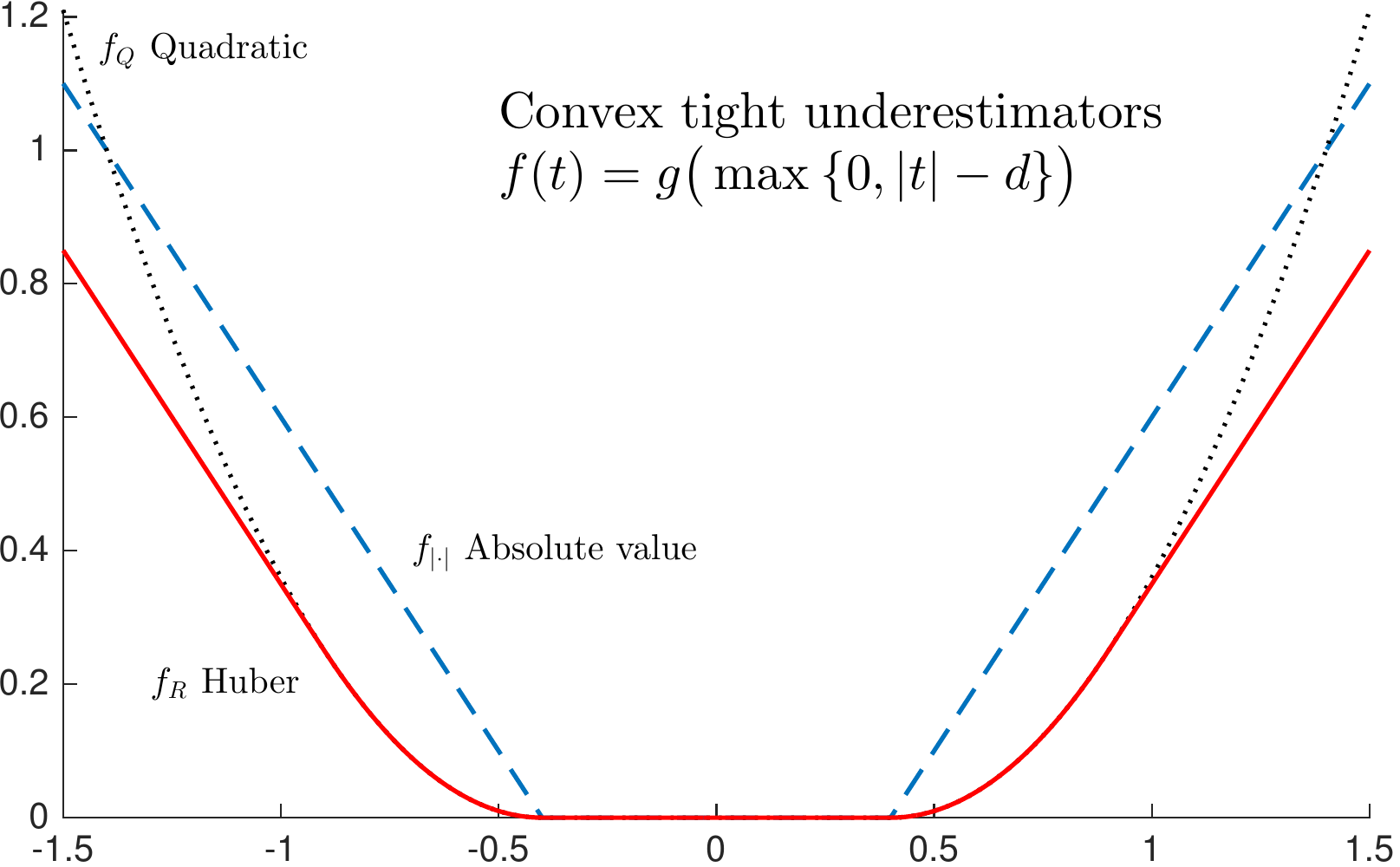}
  \caption{All functions~$f$ are tight underestimators of the
    functions~$g$ in Figure~\ref{fig:nonconvex}. They are the convex
    envelopes and, thus, the best convex approximations to each one of
    the original nonconvex cost terms. The convexification is
    performed by restricting the arguments of~$g$ to be nonnegative.}
  \label{fig:convexified}
\end{figure}
as depicted in Figure~\ref{fig:convexified}. Here, we observe that the
high-power behavior is maintained, whereas the medium/low-power is
only altered in the convexified area. We define the convex costs by
composing any of the convex functions~$h$ with a nondecreasing
function~$s$
 \begin{equation*}
   s(t) = \max \{ 0, t\}
 \end{equation*}
which, in turn, operates on the discrepancies
\begin{align*}
  \delta_{ij}(x) = \|x_{i} - x_{j}\| - d_{ik}, \\
  \delta_{ik}(x_{i}) = \|x_{i} - a_{k}\| - r_{ik}.
\end{align*}
As~$s\left (\delta_{ij}(x)\right )$ and~$s\left (\delta_{ik}(x)\right
)$ are nondecreasing and each one of the functions~$h$ is convex,
then
\begin{equation}\label{eq:huber-cvx}
\begin{aligned}
  f_{R}(x) = &\sum _{i \sim j} \frac 12 h_{R_{ij}}\left(s\left(\|x_{i} - x_{j}\| -
      d_{ij}\right) \right) + \\
  &\sum_{i} \sum_{k \in \mathcal{A}_{i}} \frac 12 h_{Ra_{ik}}\left(s\left(\|x_{i}-a_{k}\|
      - r_{ik}\right) \right)
\end{aligned}
\end{equation}
is also convex. 
The cost function~\eqref{eq:huber-cvx} also appears in Yousefi
\ea~\cite{YousefiChangChampagne2014} via a distinct reasoning. But the
striking difference with respect to Yousefi \ea~is how the
cost~\eqref{eq:huber-cvx} is exploited here to generate distributed
solution methods where all nodes work in parallel, for the synchronous
algorithm, or are randomly awaken, for the asynchronous
algorithm.

\subsection{Approximation quality of the  convex underestimator}
\label{sec:appr-qual-cvx-underestimator}

The quality of the convexified quadratic problem was
addressed in~\cite{soares2014simple}, which we summarize here for
the reader's convenience and extend to the two other convex problems.

The optimal value of the nonconvex~$g$, denoted by~$g^{\star}$, is
bounded by
\begin{equation*}
  f^{\star} = f(x^{\star}) \leq g^{\star} \leq g(x^{\star}),
\end{equation*}
where~$x^{\star}$ is the minimizer of the convex underestimator~$f$,
and
\begin{equation*}
  f^{\star} = \min_{x} f(x),
\end{equation*}
is the minimum of function~$f$. A bound for the optimality gap is,
thus,
\begin{IEEEeqnarray*}{rCl}
  g^{\star} - f^{\star} &\leq & g(x^{\star}) - f^{\star}.
\end{IEEEeqnarray*}
It is evident that in all cases (quadratic, Huber, and absolute
value)~$f$ is equal to~$g$ when~$\|x_{i} - x_{j}\| \geq d_{ij}$
and~$\|x_{i}-a_{k}\| \geq r_{ik}$. When the function terms differ,
say, for all edges\footnote{The same reasoning would apply to anchor
  terms.}~$i \sim j \in \mathcal{E}_{2} \subset \mathcal{E}$, we
have~$s\left(\|x_{i} - x_{j}\| - d_{ij}\right) = 0$, leading to
\begin{IEEEeqnarray}{rCl} \label{eq:bound-tight2} 
  g_{Q}^{\star} - f_{Q}^{\star} &\leq & \sum_{i \sim j \in \mathcal{E}_{2}} \frac 12
  \left(\|x_{i}^{\star} - x_{j}^{\star}\| - d_{ij}
  \right)^{2}\\ \label{eq:bound-tight1} 
  g_{| \cdot |}^{\star} - f_{| \cdot |}^{\star} &\leq & \sum_{i \sim j
    \in \mathcal{E}_{2}} \frac 12 \left| \|x_{i}^{\star} - x_{j}^{\star}\| - d_{ij}
  \right|\\ \label{eq:bound-tighth}
  g_{R}^{\star} - f_{R}^{\star} &\leq & \sum_{i \sim j \in
    \mathcal{E}_{2}} \frac 12 h_{R_{ij}}\left(\|x_{i}^{\star} -
      x_{j}^{\star}\| - d_{ij} \right),
\end{IEEEeqnarray}
where
\begin{equation*}
  \mathcal{E}_{2} = \{i \sim j \in \mathcal{E} :
\|x_i^\star - x_j^\star\| < d_{ij}) \}.
\end{equation*}
These bounds are an optimality gap guarantee available after the
convexified problem is solved; they tell us how low our estimates can
bring the original cost. Our bounds are tighter than the ones
available \textit{a priori} from applying~\cite[Th. 1]{udellBoyd2014},
which are
\begin{IEEEeqnarray}{rCl}\label{eq:udellBoyd2}
  g_{Q}^{\star} - f_{Q}^{\star} &\leq & \sum_{i \sim j} \frac 12
  d_{ij}^{2}\\ \label{eq:udellBoyd1}
  g_{| \cdot |}^{\star} - f_{| \cdot |}^{\star} &\leq & \sum_{i \sim j}
  \frac 12 d_{ij}\\ \label{eq:udellBoydh}
  g_{R}^{\star} - f_{R}^{\star} &\leq & \sum_{i \sim j} \frac 12
  h_{R_{ij}}\left( d_{ij} \right). 
\end{IEEEeqnarray}
For a single-node 1D example whose costs for a single noise
realization are exemplified in Figure~\ref{fig:nonconvexities},
\begin{figure}[tb]
  \centering 
\subfigure[Quadratic.]
  {\includegraphics[width=\columnwidth]{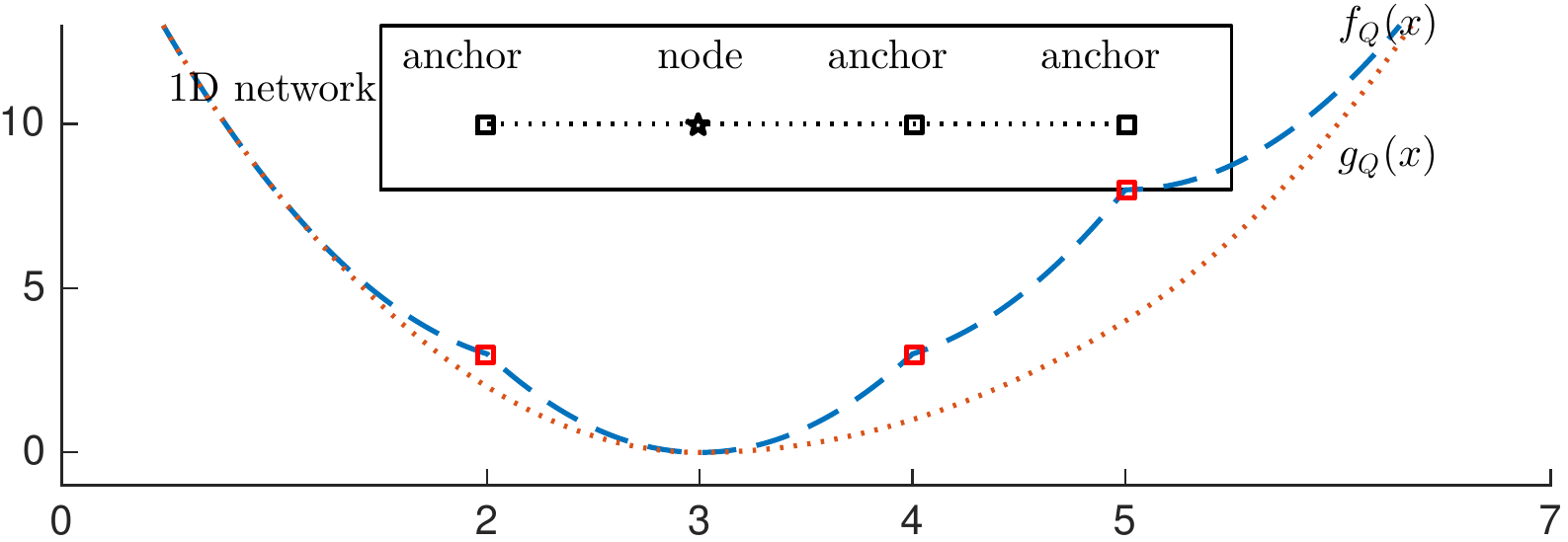}
  \label{fig:nonconvexities2}}
\subfigure[Absolute value.]
  {\includegraphics[width=\columnwidth]{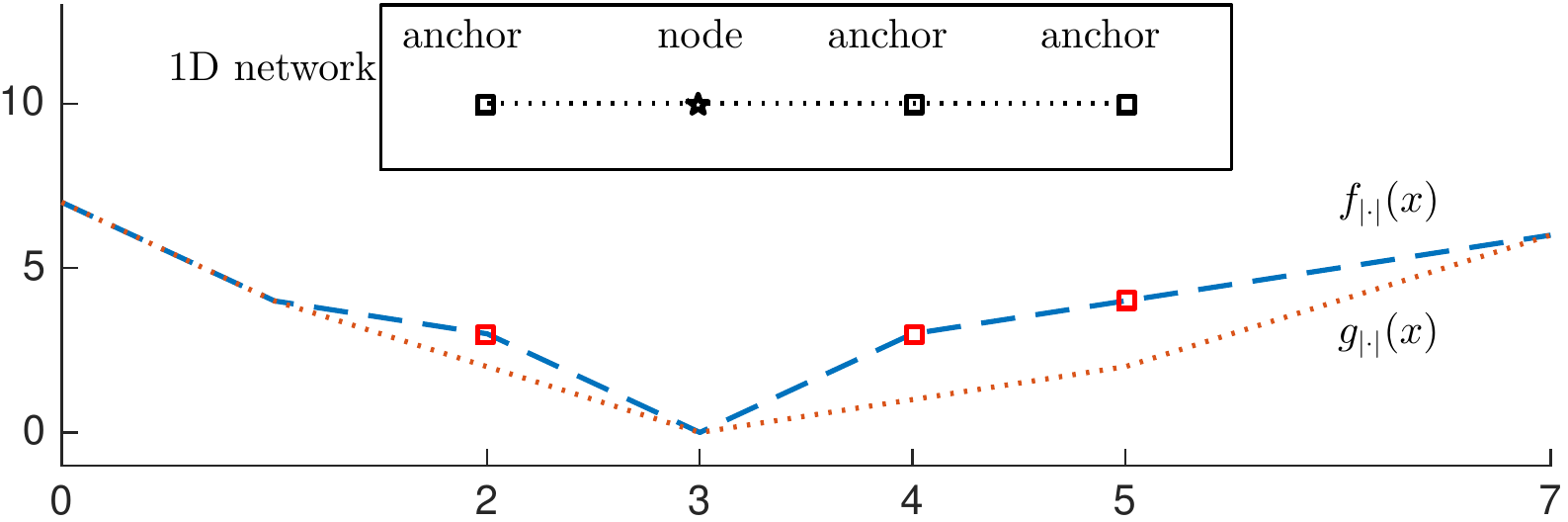}
  \label{fig:nonconvexities1}}
\subfigure[Robust Huber.]
  {\includegraphics[width=\columnwidth]{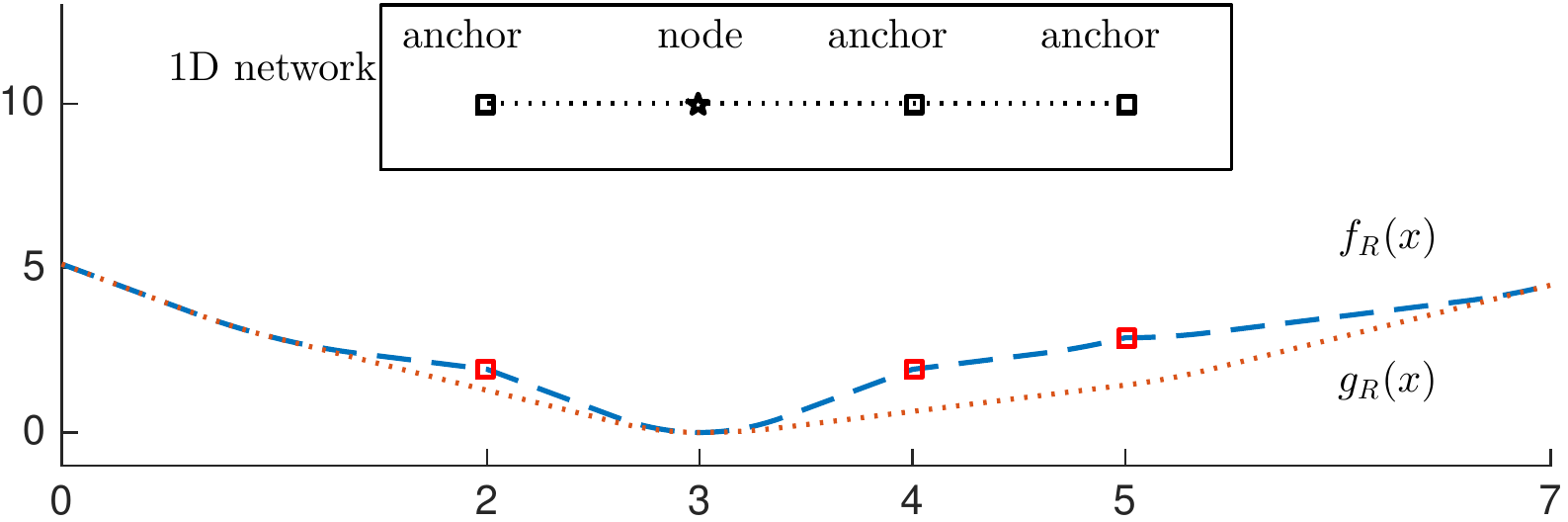}
  \label{fig:nonconvexitiesh}}
\caption{Single node 1D example of the quality of the approximation of
  the true nonconvex costs $g(x)$ by the convexified functions
  $f(x)$. The node positioned at $x=3$ has 3 neighboring anchors. The
  cost value is indicated in the vertical axis, while the tentative
  node position runs on the horizontal. The actual network is depicted
  above the plots.}
    \label{fig:nonconvexities}
\end{figure}
the bounds in~\eqref{eq:bound-tight2}-\eqref{eq:bound-tighth}
and~\eqref{eq:udellBoyd2}-\eqref{eq:udellBoydh},
\begin{table}[t]
  \caption{Bounds on the optimality gap for the example in Figure~\ref{fig:nonconvexities}}
  \label{tab:bounds}
  \centering
  \begin{tabular}[h]{@{}lccc@{}}
    \toprule
    Cost &
    \textbf{$g^{\star} - f^{\star}$} &
    \textbf{Eqs.~\eqref{eq:bound-tight2}-\eqref{eq:bound-tighth}} &
    \textbf{Eqs.~\eqref{eq:udellBoyd2}-\eqref{eq:udellBoydh}}\\\midrule
    Quadratic &3.7019&5.5250&11.3405\\
    Absolute value &1.1416&1.1533&3.0511\\
    Robust Huber &0.1784&0.1822&0.4786\\
    \bottomrule
  \end{tabular}
\end{table}
averaged over 500 Monte Carlo trials, are presented in
Table~\ref{tab:bounds}. The true average gap $g^{\star} - f^{\star}$
is also listed. In the Monte Carlo trials we sampled a set of zero
mean Gaussian random variables with $\sigma = 0.04$ for the baseline
Gaussian noise and obtained a noisy range measurement as
in~\eqref{eq:noise} below. One of the measurements is then corrupted
by a zero mean random variable with~$\sigma = 4$, modelling outlier
noise. These results show the tightness of the convexified function
under such noisy conditions and also demonstrate the looseness of the
\textit{a priori} bounds
in~\eqref{eq:udellBoyd2}-\eqref{eq:udellBoydh}. We can observe in
Figure~\ref{fig:nonconvexities} why the Huber-based relaxation will
perform better than the other two: not only do we use a robust
dissimilarity, but we also add a smaller optimality gap with the
surrogate. In the end, the Huber-based approximation will be tighter,
thus conferring robustness to the estimator, as pointed out by Destino
and Abreu~\cite{DestinoAbreu2011}.

\section{Distributed and robust sensor network localization}
\label{sec:distr-robust-sens}

We construct our algorithm by rewriting~\eqref{eq:huber-cvx} as the
infimum of a sum of Huber functions composed with a norm, and then by
rewriting each of the terms with an alternative representation that
uncovers the possibility of a naturally distributed, optimal method
for the estimation of the unknown sensor positions. For the first
step, we state the following:
\begin{proposition}
  \label{prop:cvx-term-norm}
  Each term of the first summation of~\eqref{eq:huber-cvx},
  corresponding to the edge~$i \sim j$, has a variational
  representation
  \begin{equation}
    \label{eq:cvx-term-norm}
    h_{R_{ij}}(s(\|x_{i}-x_{j}\| - d_{ij})) =\inf_{\|y_{ij}\|\leq d_{ij}} h_{R_{ij}}(\|x_{i}-x_{j}-y_{ij}\|),
  \end{equation}
  where~$y_{ij} \in \reals^{p}$ is an optimization variable.
\end{proposition}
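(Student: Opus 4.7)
The plan is to reduce the claim to an elementary projection argument by exploiting the fact that the Huber function is nondecreasing on the nonnegative reals. Fix an edge $i \sim j$ and, to lighten notation, set $z = x_{i} - x_{j}$, $d = d_{ij}$, and $R = R_{ij}$. The goal becomes
\begin{equation*}
h_{R}(s(\|z\| - d)) = \inf_{\|y\| \leq d} h_{R}(\|z - y\|).
\end{equation*}

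First I would observe that $h_{R}$, as defined in \eqref{eq:huber-loss}, is even, continuous, and nondecreasing on $[0, \infty)$ (on $[0,R]$ it equals $t^{2}$, on $[R,\infty)$ it equals $2Rt - R^{2}$, and the two pieces agree at $t = R$). Since the argument $\|z - y\|$ is nonnegative, this monotonicity lets me swap the infimum with $h_{R}$, giving
\begin{equation*}
\inf_{\|y\| \leq d} h_{R}(\|z - y\|) = h_{R}\!\left( \inf_{\|y\| \leq d} \|z - y\| \right).
\end{equation*}

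The remaining task is to evaluate the inner infimum, which is simply the distance from $z$ to the closed Euclidean ball $B(0,d) \subset \reals^{p}$. If $\|z\| \leq d$, then $y = z$ is feasible and achieves value $0$. If $\|z\| > d$, then $y = d\, z/\|z\|$ lies on the boundary of $B(0,d)$ and yields $\|z - y\| = \|z\| - d$, which is optimal because $\|z - y\| \geq \|z\| - \|y\| \geq \|z\| - d$ for every feasible $y$ by the reverse triangle inequality. In both cases the infimum equals $\max\{0, \|z\| - d\} = s(\|z\| - d)$, which upon substitution gives exactly \eqref{eq:cvx-term-norm}.

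There is no real obstacle: the monotonicity of $h_{R}$ on $[0,\infty)$ does all the heavy lifting, and the rest is the standard projection onto a ball combined with the reverse triangle inequality. The same argument applies verbatim to the anchor terms, replacing $x_{j}$ by $a_{k}$ and $d_{ij}$ by $r_{ik}$, so the representation extends term-by-term to the whole cost \eqref{eq:huber-cvx}.
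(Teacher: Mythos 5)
Your proof is correct and follows essentially the same route as the paper's: both arguments rest on the monotonicity of $h_{R_{ij}}$ on $[0,\infty)$ together with the identity $\inf_{\|y_{ij}\|\leq d_{ij}}\|x_{i}-x_{j}-y_{ij}\| = s(\|x_{i}-x_{j}\|-d_{ij})$, the paper phrasing this as two inequalities while you phrase it as pulling the infimum through the nondecreasing loss and then evaluating the distance to the ball. The only difference is that you explicitly verify the distance-to-ball formula via the projection and the reverse triangle inequality (which also licenses the interchange, since the inner infimum is attained), whereas the paper simply asserts it.
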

The proof is detailed in Appendix~\ref{sec:proof-prop-cvx-term-norm}.

\subsection{Alternative representation of the Huber function composed
  with a norm}
\label{sec:repr-huber-comp}

Using the variational representation from
Proposition~\ref{prop:cvx-term-norm}, we can rewrite the convex
unconstrained minimization Problem~\eqref{eq:huber-cvx} as the
constrained problem
\begin{equation}
  \label{eq:cvx-aux-variables}
  \begin{aligned}
    \minimize_{x,y,w} &\sum_{i \sim j}\frac 12 h_{R_{ij}}(\|x_{i}-x_{j}-y_{ij}\|) + \\
    &\sum_{i} \sum_{k \in \mathcal{A}_{i}}
    \frac 12 h_{R_{ik}}(\|x_{i}-a_{k}-w_{ik}\|)\\
    \st &\left\{\|y_{ij}\| \leq d_{ij}, i \sim j\right\}\\
    &\left\{\|w_{ik}\| \leq r_{ik}, i \in \mathcal{V}, k \in
      \mathcal{A}_{i}\right\} 
  \end{aligned}
\end{equation}
where~$x = \{x_{i} : i \in \mathcal{V}\}$, $y = \{y_{ij} : i \sim j\}$,
and~$w = \{w_{ik} : i \in \mathcal{V}, k \in \mathcal{A}_{i}\}$. We
put forward a new representation of the Huber function
in~\eqref{eq:huber-loss}, when composed with the norm of a vector as
\begin{equation}
  \label{eq:huber-norm}
  \psi_{R}(u) = \|u\|^{2} - \mathrm{d}^{2}_{R}(u),
\end{equation}
where we denote by~$\mathrm{d}^{2}_{R}(u)$ the squared distance of
vector~$u$ to a ball of radius~$R$ centered at the origin. We use this
representation to rewrite the cost in
problem~\eqref{eq:cvx-aux-variables} as
\begin{equation*}
  \begin{aligned}
    \sum_{i \sim j} \frac 12 \|x_{i}-x_{j}-y_{ij}\|^{2} - \frac 12
    \mathrm{d}^{2}_{R_{ij}}(x_{i}-x_{j}-y_{ij}) + \\
    \sum_{i} \sum_{k \in \mathcal{A}_{i}}\frac 12
    \|x_{i}-a_{k}-w_{ik}\|^{2} - \frac 12
    \mathrm{d}^{2}_{R_{ik}}(x_{i}-a_{k}-w_{ik}),
  \end{aligned}
\end{equation*}
and we further work the problem, leading to
\begin{equation*}
  \begin{aligned}
    &\frac 12 \|Ax-y\|^{2} - \frac 12 \mathrm{d}^{2}_{\tilde R}(Ax-y) + \\
    &\sum_{i \in \mathcal{V}} \frac 12 \|x_{i}\otimes \ones -\alpha_{i}
    -w_{i}\|^{2} - \frac 12 \mathrm{d}^{2}_{\tilde Ra}(x_{i}\otimes
    \ones -\alpha_{i} -w_{i}),
  \end{aligned}
\end{equation*}
where~$\tilde{R}$ is the Cartesian product of the
balls~$\{x : \|x\| \leq R_{ij}, i \sim j\}$, $\tilde{Ra}$ is the
Cartesian product of the
balls~$\{x : \|x\| \leq R_{ik}, k \in \mathcal{A}_{i}, i \in
\mathcal{V}\}$,
and $\mathrm{d}^{2}_{\tilde R}(\cdot)$ is the squared distance to
set~$\tilde R$ (similarly for~$\mathrm{d}^{2}_{\tilde R a}(\cdot)$).
Matrix $A = C \otimes I$ is the Kronecker product of the arc-node
incidence matrix\footnote{The arc-node incidence matrix $C$ is a
  $|\mathcal{E}| \times |\mathcal{V}|$ matrix. The rows and the
  columns of $C$ are indexed by $\mathcal{E}$ and $\mathcal{V}$,
  respectively. The $(e,i)$-entry of $C$ is 0 if node $i$ and edge $e$
  are not incident, and otherwise it is 1 or -1 according to the
  direction agreed on the operation onset by the two nodes involved in
  edge~$e$.}~$C$ associated with the graph~$\mathcal{G}$, and the
identity matrix with dimension of the ambient space (usually, 2 or 3).
The terms~$\alpha_{i} = \{ a_{k} : k \in \mathcal{A}_{i}\}$ are the
collections of the positions of the anchors within range of each node,
and~$w_{i} = \{ w_{ik}, k \in \mathcal{A}_{i}\}$ are the collections
of associated variables.  Consider the aggregation of
variables~$z = (x,y,w)$; we define the constraint set
in~\eqref{eq:cvx-aux-variables} as
\begin{equation}
  \label{eq:constraint-set}
  \mathcal{Z} = \{(x,y,w) : \|y_{ij}\| \leq d_{ij}, i \sim j,
  \|w_{ik}\| \leq r_{ik}, k \in \mathcal{A}_{i}, i \in \mathcal{V}\},
\end{equation}
and the cost as
\begin{equation}
  \label{eq:matrix-cost}
  F(z) := \frac 12 \|Bz\|^{2} - \frac 12 \mathrm{d}^{2}_{\tilde{R}}(Bz) +
  \frac 12 \|Ez-\alpha\|^{2} - \frac 12 \mathrm{d}^{2}_{\tilde{R}a}(Ez-\alpha),
\end{equation}
where~$B = [A\quad -I \quad 0]$,
matrix~$E = [E_{x} \quad 0 \quad -I]$, and~$E_{x}$ is a selector
matrix of the anchor terms associated with each separate node.
With this notation,~\eqref{eq:cvx-aux-variables} becomes
\begin{equation}
  \label{eq:cvx-matrix-z}
  \begin{aligned}
    \minimize & \quad F(z)\\
    \st & \quad z \in \mathcal{Z}.
  \end{aligned}
\end{equation}

\subsection{Gradient}
\label{sec:gradient}
To compute the gradient of the cost in~\eqref{eq:matrix-cost}, we need
the gradient of the squared distance to a convex set --- a result from
convex analysis (see~\cite[Prop. X.3.2.2,
Th. X.3.2.3]{UrrutyMarechal1993}).
Let us denote the squared distance to the convex set~$C$ as
\begin{equation*}
  \phi(u) = \frac 12 \mathrm{d}^{2}_{C}(u).
\end{equation*}
 Then, from convex analysis, we know
that~$\phi$ is convex, differentiable, and its gradient is
\begin{equation}
  \label{eq:grad-sq-distance}
  \nabla \phi (u) = u - \mathrm{P}_{C}(u),
\end{equation}
where~$\mathrm{P}_{C}(u)$ is the orthogonal projection of point~$u$
onto the set~$C$,
\begin{equation*}
  \mathrm{P}_{C}(u) = \argmin_{y \in C} \|u-y\|.
\end{equation*}
Knowing this, we can compute the gradient of~\eqref{eq:huber-norm} as
\begin{equation*}
  \begin{aligned}
    \nabla \psi_{R}(u) &=  2 \mathrm{P}_{R}(u)
  \end{aligned}
\end{equation*}
and the gradient of~\eqref{eq:matrix-cost} as
\begin{IEEEeqnarray}{rCl}\IEEEnonumber
  \nabla F (z) &=& \frac 12 B^{\top} \nabla  \psi_{\tilde{R}}(Bz) +
  \frac 12
  E^{\top} \nabla  \psi_{\tilde{R}a}(Ez-\alpha)\\\label{eq:gradient}
  &=& B^{\top} \mathrm{P}_{\tilde{R}}(Bz) + E^{\top}
  \mathrm{P}_{\tilde{R}a}(Ez-\alpha).
\end{IEEEeqnarray}

\subsection{Lipschitz constant}
\label{sec:lipschitz-constant}

It is widely known that projections onto convex sets shrink
distances~\cite{Phelps1957}, \ie,
\begin{equation*}
  \|\mathrm{P}_{C}(u) - \mathrm{P}_{C}(v)\| \leq \|u - v\|,
\end{equation*}
and this means that the gradient of~\eqref{eq:huber-norm} is Lipschitz
continuous with Lipschitz constant~$1$. Using this, we can
compute a Lipschitz constant for~\eqref{eq:gradient}. First, let us
focus on the inter-node term in~\eqref{eq:gradient}:
\begin{IEEEeqnarray*}{rCl}\IEEEnonumber
  \|B^{\top}\mathrm{P}_{\tilde{R}}(Bu) -
  B^{\top}\mathrm{P}_{\tilde{R}}(Bv)\| &\leq&
  \mtxnorm{B}\|\mathrm{P}_{\tilde{R}}(Bu) -
  \mathrm{P}_{\tilde{R}}(Bv)\|\\\IEEEnonumber
  &\leq&\mtxnorm{B}\|Bu-Bv\|\\\IEEEnonumber
  &\leq&\mtxnorm{B}^{2}\|u-v\|\\\IEEEnonumber
  &=& \lambda_{\mathrm{max}}(BB^{\top})\|u-v\|.
\end{IEEEeqnarray*}
The maximum eigenvalue of~$BB^{\top}$ can be bounded by
\begin{IEEEeqnarray}{rCl}\IEEEnonumber
  \lambda_{\mathrm{max}}(BB^{\top}) &=& \lambda_{\mathrm{max}}\left(
  \begin{bmatrix}
    A& -I& 0
  \end{bmatrix}
  \begin{bmatrix}
    A^{\top}\\ -I \\ 0
  \end{bmatrix}
\right)\\\IEEEnonumber
&=&\lambda_{\mathrm{max}}(AA^{\top} +I)\\\IEEEnonumber
&=& (1+ \lambda_{\mathrm{max}}(AA^{\top}))\\\IEEEnonumber
&=& (1+ \lambda_{\mathrm{max}}(L))\\ \label{eq:inter-node-lips}
&\leq& (1+ 2\delta_{\mathrm{max}}),
\end{IEEEeqnarray}
where~$L$ is the graph Laplacian, and~$\delta_{\max}$ is the maximum
node degree of the network. A proof of the
inequality~$\lambda_{\mathrm{max}}\leq 2\delta_{\mathrm{max}}$ is in
Bapat~\cite{Bapat2010}. In the same way, for the node-anchor
terms, we have
\begin{IEEEeqnarray}{rCl}\IEEEnonumber
  \|E^{\top}\mathrm{P}_{\tilde{R}a}(Eu) -
  E^{\top}\mathrm{P}_{\tilde{R}a}(Ev)\| &\leq&
  \mtxnorm{E}^{2}\|u-v\|\\\IEEEnonumber
  &=& \lambda_{\mathrm{max}}(EE^{\top})\|u-v\|\\\IEEEnonumber
\end{IEEEeqnarray}
and this constant can be upper-bounded by
\begin{IEEEeqnarray}{rCl}\IEEEnonumber  
\lambda_{\mathrm{max}}(EE^{\top})&=&\lambda_{\mathrm{max}}\left(
  \begin{bmatrix}
    E_{x} & 0& -I
  \end{bmatrix}
  \begin{bmatrix}
    E_{x}^{\top}\\ 0\\-I
  \end{bmatrix}
\right)\\\IEEEnonumber 
&=&
\lambda_{\mathrm{max}}\left(E_{x}E^{\top}_{x}+I\right)\\\IEEEnonumber
&\leq&
\left(1+\lambda_{\mathrm{max}}(E_{x}E^{\top}_{x})\right)\\\label{eq:node-anchor-lips}
&\leq& \left(1 + \max_{i \in \mathcal{V}}
  |\mathcal{A}_{i}|\right).
\end{IEEEeqnarray}
From~\eqref{eq:inter-node-lips} and~\eqref{eq:node-anchor-lips} we can
see that a Lipschitz constant for~\eqref{eq:matrix-cost} is
\begin{equation}
  \label{eq:lipschitz}
  L_{F} = 2 + 2\delta_{\mathrm{max}} + \max_{i \in \mathcal{V}} |\mathcal{A}_{i}|.
\end{equation}
We stress that this constant is small, does not depend on the size of
the network, and can be computed in a distributed
way~\cite{chung1997spectral}.

\subsection{Synchronous algorithm}
\label{sec:synchr-algor}

The gradient in~\eqref{eq:gradient} and its Lipschitz continuity, with
the constant in~\eqref{eq:lipschitz}, equip us to use the optimal
gradient optimization method due to Nesterov~(\cite{Nesterov1983}\cite{Nesterov2004}), further developed by Beck and Teboulle \cite{BeckTeboulle2009}.
Firstly, we must write the problem as an unconstrained minimization
using an indicator function~$I_{\mathcal{Z}}(u)=
\begin{cases}
  0 \qquad \text{if } u \in \mathcal{Z}\\ +\infty \quad \text{otherwise }
\end{cases}
$, and incorporate the constraints in the problem formulation. Then we
perform the proximal minimization of the unconstrained problem. The
result for our reformulation is shown in Algorithm~\ref{alg:synchronous}.
\begin{algorithm}[tb]
  \caption{Synchronous method: syncHuber}
  \label{alg:synchronous}
  \begin{algorithmic}[1] 
    \REQUIRE
    $L_{F}; \{d_{ij} : i \sim j \in \mathcal{E}\}; \{r_{ik} : i \in
    \mathcal{V}, k \in \mathcal{A}\};$
    \ENSURE $\hat x$ \STATE each node~$i$ chooses
    arbitrary~$x_{i}^{0} = x_{i}^{-1}$; \STATE set
    $y_{ij}^{0} = \mathrm{P}_{\mathcal{Y}_{ij}}\left(x^{0}_{i}
      -x^{0}_{j}\right)$,
    $ \mathcal{Y}_{ij} = \{y \in \reals^{p}: \|y\| \leq d_{ij}\} $;
    and
    $w_{ik}^{0} = \mathrm{P}_{\mathcal{W}_{ik}}\left(x^{0}_{i}
      -a_{k}\right)$,
    $ \mathcal{W}_{ik} = \{w \in \reals^{p}: \|w\| \leq r_{ik}\} $;
    \STATE $t = 0$; 
    \WHILE{some stopping criterion is not met, each
      node~$i$} 
    \STATE $t = t+1$; 
    \STATE
    $\xi_{i} = x_{i}^{t-1} + \frac{t-2}{t+1} \left(x_{i}^{t-1} -
      x_{i}^{t-2} \right)$;\label{alg:xi}
    \STATE broadcast~$\xi_{i}$ to all neighbors; 
    \FOR{all~$j$ in the neighbor set~$N_{i}$} 
    \STATE
    $\upsilon_{ij} = y_{ij}^{t-1} + \frac{t-2}{t+1} \left(y_{ij}^{t-1}
      - y_{ij}^{t-2} \right)$;\label{alg:upsilon}
    \STATE
    $ y_{ij}^{t} = \mathrm{P}_{\mathcal{Y}_{ij}} \left( \upsilon_{ij}
      + \frac{1}{L_{F}} \mathrm{P}_{R_{ij}}\left(\xi_{i}-\xi_{j} -
        \upsilon_{ij}\right)\right) $;
    \ENDFOR
    \FOR{all~$k$ in the anchor set~$\mathcal{A}_{i}$}
    \STATE
    $
    \omega_{ik} = w_{ik}^{t-1} + \frac{t-2}{t+1} \left(w_{ik}^{t-1} -
      w_{ik}^{t-2} \right)$;\label{alg:omega}
    \STATE  $w_{ik}^{t} = \mathrm{P}_{\mathcal{W}_{ik}}\left(
        \omega_{ik}+\frac{1}{L_{F}}\mathrm{P}_{Ra_{ik}}(\xi_{i}-a_{ik}-\omega_{ik})\right)
    $;
    \ENDFOR
    \STATE  
    \begin{align*}
        G = &\sum_{j \in N_{i}}
        \mathrm{P}_{R_{ij}}(\xi_{i}-\xi_{j}-\upsilon_{ij}) + \\
        & \sum_{k \in \mathcal{A}_{i}} \mathrm{P}_{Ra_{ik}}\left(\xi_{i}-a_{k}-\omega_{ik}\right);
      \end{align*}
    \STATE $
          x_{i}^{t} =\xi_{i} - \frac{1}{L_{F}}G
     $;\label{alg:x}
    \ENDWHILE
    \RETURN $\hat x_{i} = x_{i}^{t}$
  \end{algorithmic}
\end{algorithm}
Here,~$\mathcal{Y}_{ij}$ and~$\mathcal{W}_{ik}$ are the
sets~$\{x: \|x\| \leq d_{ij}\}$, and~$\{x: \|x\| \leq r_{ik}\}$,
respectively. Also,~$\mathcal{Y}$ and~$\mathcal{W}$ are the constraint
sets associated with the acquired measurements between sensors, and
between anchors and sensors, respectively, and~$N_{i}$ is the set of
 neighbor nodes of node~$i$. We denote the entries of~$\nabla F$
regarding variable~$x_{i}$ as~$G$. We observe that each block
of~$z = (x, y, w)$ at iteration~$t$ will only need local neighborhood
information, as shown in Algorithm~\ref{alg:synchronous}. To
demonstrate the natural distribution of the method we go back
to~\eqref{eq:gradient}. Here, the
term~$E^{\top} \mathrm{P}_{\tilde{R}a}(Ez-a)$ only involves anchor
measurements relative to each node, and so it is distributed. The
term~$B^{\top}\mathrm{P}_{\tilde{R}}(Bz)$ is less clear. The
vector~$Bz$ collects~$x_{i}-x_{j}-y_{ij}$ for all edges~$i \sim j$ and
to it we apply the projection operator onto the Cartesian product of
balls. This is the same as applying a projection of each edge onto
each ball. When left multiplying with~$B^{\top}$ we
get~$B^{\top}\mathrm{P}_{\tilde{R}}(Bz)$. The left multiplication
by~$B^{\top}$ will group at the position of each node variable~$x_{i}$
the contributions of all incident edges to node~$i$. To update
the~$y_{ij}$ variables we could designate one of the incident nodes as
responsible for the update and then communicate the result to the
non-computing neighbor. But, to avoid this expensive extra
communication, we decide that each node~$i$ should compute its
own~$y_{ij}$, where~$y_{ij} = -y_{ji}$ for all edges. Also, with this
device, the gradient entry regarding variable~$x_{i}$ would
be~$\sum_{j \in N_{i}} C_{(i\sim j,i)} \mathrm{P}_{R_{ij}}\left(
  C_{(i\sim j,i)}(x_{i}-x_{j} - y_{ij})\right)$.
The symbol $C_{(i \sim j,i)}$ denotes the arc-node incidence matrix
entry relative to edge~$i \sim j$ (row index) and node~$i$ (column
index).  As the projection onto a ball of radius~$R$ centered at the
origin can be written as~$\mathrm{P}_{R}(u) =
\begin{cases}
  \frac {u}{\|u\|}R & \qquad \text{if } \|u\| > R\\
  u & \qquad \text{if } \|u\| \leq R
\end{cases}, $
then~$\mathrm{P}_{R}(-u) = - \mathrm{P}_{R}(u)$, and, thus, the
gradient entry regarding variable~$x_{i}$
becomes~$\sum_{j \in N_{i}} \mathrm{P}_{R_{ij}}\left( x_{i}-x_{j} -
  y_{ij} \right)$, as stated in Algorithm~\ref{alg:synchronous}.
Each node~$i$ will update the current estimate of its own
position, each one of the~$y_{ij}$ for all the incident
edges and the anchor terms~$w_{ik}$, if any. 
In step~\ref{alg:xi} we have the extrapolation step for each~$x_{i}$,
whereas in steps~\ref{alg:upsilon} and~\ref{alg:omega} we can see the
update of the extrapolation steps for each one of the edge
variables~$y_{ij}$, and~$w_{ik}$, respectively.

\subsection{Asynchronous algorithm}
\label{sec:asynchr-algor}
In Section~\ref{sec:synchr-algor} we presented a distributed method
addressing the robust network localization problem in a scalable
manner, where each node uses information from its neighborhood and
performs a set of simple arithmetic computations. But the results
still depend critically on synchronous computation, where nodes
progress in lockstep through iterations. As the number of processing
nodes becomes very large, this synchronization can become seriously
difficult --- and unproductive. An asynchronous approach is called for
in such very large-scale and faulty settings. In an asynchronous time
model, the nodes move forward independently and algorithms withstand
certain types of faults, like temporary unavailability of a node.  To
address this issue, we present a fully asynchronous method, based on a
broadcast gossip scheme (c.f. Shah~\cite{shah2009} for an extended
survey of gossip algorithms).

Nodes are equipped with independent clocks ticking at random times
(say, as Poisson point processes).
When node~$i$'s clock ticks, it performs the update of its
variables and broadcasts the update to its neighbors. Let the
order of node activation be collected
in~$\{\chi_{t}\}_{t \in \naturals}$, a sequence of independent random
variables taking values on the set~$\mathcal{V}$, such that
\begin{equation}
  \label{eq:rv}
  \prob(\chi_{t} = i) = P_{i} > 0.
\end{equation}
Then, the asynchronous update of variables on node~$i$ can be
described as in Algorithm~\ref{alg:asyncronous}.
\begin{algorithm}[tb]
  \caption{Asynchronous method: asyncHuber}
  \label{alg:asyncronous}
  \begin{algorithmic}[1]
    \REQUIRE $L_{F}; \{d_{ij} : i \sim j \in \mathcal{E}\};
    \{r_{ik} : i \in \mathcal{V}, k \in \mathcal{A}\};$
    \ENSURE $\hat x$
    \STATE each node $i$ chooses random $x_{i}(0)$;
    \STATE set $y_{ij}^{0} =
    \mathrm{P}_{\mathcal{Y}_{ij}}\left(x^{0}_{i} -x^{0}_{j}\right)$,
    $
    \mathcal{Y}_{ij} = \{y \in \reals^{p}: \|y\| \leq d_{ij}\}
    $
    and $w_{ik}^{0} =
    \mathrm{P}_{\mathcal{W}_{ik}}\left(x^{0}_{i} -a_{k}\right)$,
    $
    \mathcal{W}_{ik} = \{w \in \reals^{p}: \|w\| \leq r_{ik}\}
    $
    \STATE $t = 0$;
    \WHILE{some stopping criterion is not met, each node $i$}
    \STATE $t = t + 1;$
    \STATE $
    \begin{aligned}[t]
      x_{i}^{t} =
      \begin{dcases}
        \argmin_{\substack{\xi_{i}, \{\upsilon_{ij} \in
            \mathcal{Y}_{ij}, i \sim j\}, \\ \{\omega_{ik} \in \mathcal{W}_{ik}, k
          \in \mathcal{A}_{k}\}}} F_{i}(\xi_{i},\{\upsilon_{ij}\},\{\omega_{ik}\}) \label{alg:asynminx}
        &\mbox{if } \chi_{t} = i\\
        x_{i}^{t-1} &\mbox{otherwise;}
      \end{dcases}
    \end{aligned}
    $
    \STATE if~$\xi_{t}= i$, broadcast~$x_{i}^{t}$ to neighbors
    \ENDWHILE
    \RETURN $\hat x = x^{t}$
  \end{algorithmic}
\end{algorithm}
To compute the minimizer in step~\ref{alg:asynminx} of
Algorithm~\ref{alg:asyncronous} it is useful to recast
Problem~\eqref {eq:cvx-matrix-z} as
\begin{equation}
  \label{eq:per-node}
  \begin{aligned}
    \minimize_{x,y,w} &\sum_{i} \left(\sum_{j \in N_{i}} \frac 14
      \|x_{i}-x_{j}-y_{ij}\|^{2} - \frac 14
      \mathrm{d}^{2}_{R_{ij}}(x_{i}-x_{j}-y_{ij}) + \right.\\ 
      & \left.\sum_{k \in \mathcal{A}_{i}}
      \frac 12 \|x_{i}-a_{k}-w_{ik}\|^{2} - \frac 12
      \mathrm{d}^{2}_{R_{ik}}(x_{i}-a_{k}-w_{ij}) \right)\\ 
      \st & \qquad y \in \mathcal{Y}\\
      &\qquad w \in \mathcal{W},
  \end{aligned}  
\end{equation}
where the factor~$\frac{1}{4}$ accounts for the duplicate terms when
considering summations over nodes instead of over edges. By fixing the
neighbor positions, each node solves a single source localization problem;
this setup leads to
\begin{equation}
  \label{eq:sl-problem}
  \begin{aligned}
    \minimize_{x_{i}, y_{ij},w_{ik}} & \; F_{i}(x_{i},\{y_{ij}, i \sim
    j\},\{w_{ik}, k \in \mathcal{A}_{i}\})\\
    \st & \qquad \|y_{ij}\| \leq d_{ij}\\
    & \qquad \|w_{ik}\| \leq r_{ik},
  \end{aligned}
\end{equation}
where
\begin{equation}
  \label{eq:sl-cost}
  \begin{aligned}
    F_{i}(x_{i},\{y_{ij}, i \sim j\},\{w_{ik}, k \in
    \mathcal{A}_{i}\}) = \\ \sum_{j \in N_{i}} \frac 14
    \|x_{i}-x_{j}-y_{ij}\|^{2} - \frac 14
    \mathrm{d}^{2}_{R_{ij}}(x_{i}-x_{j}-y_{ij}) + \\
     \sum_{k \in \mathcal{A}_{i}} \frac 12 \|x_{i}-a_{k}-w_{ik}\|^{2}
    - \frac 12 \mathrm{d}^{2}_{R_{ik}}(x_{i}-a_{k}-w_{ij}).
  \end{aligned}
\end{equation}
Problem~\eqref{eq:sl-problem} is convex, solvable at each
node by a general purpose solver. Nevertheless, that approach would not
take advantage of the specific problem structure, thus depriving the
solution of an efficient and simpler computational procedure. Again,
Problem~\eqref{eq:sl-problem} can be solved by the Nesterov optimal
first order method, because the gradient of~$F_{i}$ is Lipschitz
continuous in~$x_{i}, \{y_{ij}\}$, and ~$\{w_{ik}\}$, accepting the
same Lipschitz constant as~$F$, in~\eqref{eq:lipschitz}.

\section{Convergence analysis}
\label{sec:analysis}
In this section we address the convergence of 
Algorithms~\ref{alg:synchronous} and~\ref{alg:asyncronous}. We
provide convergence guarantees and rate of convergence for the
synchronous version, and we also prove convergence for the
asynchronous method.
\subsection{Synchronous algorithm}
\label{sec:analysis-synchr-algor}
As shown in Section~\ref{sec:distr-robust-sens}, Problem~\eqref
{eq:cvx-matrix-z} is convex and the cost function has a Lipschitz
continuous gradient. As proven by
Nesterov~(\hspace{1sp}\cite{Nesterov1983,Nesterov2004}), and further
developed by Beck and Teboulle~\cite{BeckTeboulle2009},
Algorithm~\ref{alg:synchronous} converges at the optimal rate
$O\left( t^{-2} \right)$; specifically,
$F( z^{t} ) - F^\star \leq \frac{2 L_{F}}{(t+1)^2} \left\| z^{0} -
  z^\star \right\|^2$,
where~$F^{\star}$ is the optimal value and~$z^{\star}$ is a minimizer
of Problem~\eqref {eq:cvx-matrix-z}.

\subsection{Asynchronous algorithm}
\label{sec:analysis-asynchr-algor}

To investigate the convergence of Algorithm~\ref{alg:asyncronous}, we
need the following assumptions:
\begin{assumption}
  \label{th:connected-assumption}
The topology of the network conforms to:
  \begin{itemize}
  \item The graph~$\mathcal{G}$ is connected;
  \item There is at least one node in~$\mathcal{G}$ with an anchor
    measurement.
  \end{itemize}
\end{assumption}
These assumptions are naturally fulfilled in the network localization
problem: if the network is supporting several disconnected components,
then each can be treated as a different network, and, for
disambiguation, localization requires the availability of 3 anchors in
2D and 4 anchors in 3D.

The convergence of Algorithm~\ref{alg:asyncronous} is stated next.
\begin{theorem}[Almost sure convergence]
  \label{th:asyn-convergence}
  Let Assumption~\ref{th:connected-assumption} hold. Consider
  Problem~\eqref{eq:cvx-matrix-z}, and the
  sequence~$\{z^{t}\}_{t \in \naturals}$ generated by
  Algorithm~\ref{alg:asyncronous}. Define the solution set as
  $\mathcal{Z}^{\star} = \{z \in \mathcal{Z}: F(z) =
  F^{\star}\}$. Then
  \begin{enumerate}
  \item $\mathrm{d}_{\mathcal{Z}^{\star}}(z^{t}) \to 0$, a.s.;
  \item $F(z^{t}) \to F^{\star}$, a.s.
  \end{enumerate}
\end{theorem}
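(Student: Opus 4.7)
The plan is to treat Algorithm~\ref{alg:asyncronous} as a randomized block coordinate minimization of the convex cost~\eqref{eq:matrix-cost} over~$\mathcal{Z}$, with one block per node (owning~$x_i$, the incident auxiliary variables~$y_{ij}$, and the anchor variables~$w_{ik}$), and to combine deterministic monotone decrease with a conditional-expectation telescoping argument to reach almost-sure convergence to the solution set.

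First I would argue that~$\mathcal{Z}^{\star}$ is non-empty and that the iterates remain in a compact set. The auxiliary variables lie by construction in a Cartesian product of closed balls. Assumption~\ref{th:connected-assumption} removes the translation invariance of the~$x_i$'s: the presence of at least one anchor term in~\eqref{eq:matrix-cost} combined with connectedness of~$\mathcal{G}$ propagates the pinning constraint to every sensor, so~$F$ has bounded sublevel sets and attains its minimum. Next, when~$\chi_t = i$, node~$i$ solves~\eqref{eq:sl-problem}, which by the per-node rewrite~\eqref{eq:per-node} coincides with partial minimization of~$F$ over the block $\zeta_i := (x_i,\{y_{ij}\}_{j \in N_i},\{w_{ik}\}_{k \in \mathcal{A}_i})$ with the remaining variables~$z_{-i}$ held fixed. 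Therefore~$F(z^{t+1}) \leq F(z^{t})$ deterministically, and the sequence~$\{F(z^t)\}$ is monotone non-increasing and bounded below by~$F^{\star}$, hence convergent almost surely to some~$F^{\infty} \geq F^{\star}$.

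Define the per-block improvement $\Delta_i(z) := F(z) - \min_{\zeta_i} F(\zeta_i;\, z_{-i}) \geq 0$, which is continuous in~$z$ by Berge's maximum theorem applied to the compact block constraints. Conditioning on the natural filtration~$\mathcal{F}_t = \sigma(\chi_1,\ldots,\chi_t)$,
\begin{equation*}
\expect[F(z^t) - F(z^{t+1}) \mid \mathcal{F}_t] = \sum_{i \in \mathcal{V}} P_i\, \Delta_i(z^t).
\end{equation*}
Summing, taking total expectation, and using~$F(z^t) \geq F^{\star}$ yields $\sum_{t \geq 0} \sum_i P_i\, \expect[\Delta_i(z^t)] \leq F(z^0) - F^{\star} < \infty$; since~$P_i > 0$ for every~$i$ by~\eqref{eq:rv}, Tonelli's theorem gives $\sum_{t \geq 0} \Delta_i(z^t) < \infty$ almost surely for each node, and in particular~$\Delta_i(z^t) \to 0$ a.s. By the compactness established above, every accumulation point~$\bar z$ of~$\{z^t\}$ satisfies $\Delta_i(\bar z) = 0$ for all~$i \in \mathcal{V}$, i.e., $\bar z$ is block-stationary for~$F$.

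To conclude, I would invoke the standard equivalence that, for a convex differentiable~$F$ minimized on a Cartesian product of convex sets, block-wise stationarity implies global optimality: the local variational inequalities $\langle \nabla_{\zeta_i} F(\bar z),\, \zeta_i - \bar z_i \rangle \geq 0$ aggregate into the full first-order condition over~$\mathcal{Z}$. Hence~$\bar z \in \mathcal{Z}^{\star}$ almost surely, yielding $\mathrm{d}_{\mathcal{Z}^{\star}}(z^t) \to 0$ a.s., and continuity of~$F$ gives $F(z^t) \to F^{\star}$ a.s. The main technical obstacle is arranging a clean block decomposition: the shared edge variable~$y_{ij}$ must be attributed to a single owner --- as enforced by the convention~$y_{ij} = -y_{ji}$ discussed in Section~\ref{sec:synchr-algor} --- so that node blocks do not overlap and the classical randomized block-minimization machinery applies. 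A secondary challenge is promoting the summability in expectation into almost-sure convergence of~$\Delta_i(z^t)$, which is routine once the continuity of the block-improvement map and the compactness of the iterate trajectory have been established.
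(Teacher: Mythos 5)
Your argument is correct, but it follows a genuinely different route from the paper's. The paper works with the expected improvement function $\phi(z)=-\psi(z)$ (Eqs.~\eqref{eq:rel-improvement}--\eqref{eq:phi}), shows via Lemma~\ref{lem:expected-improvement-properties} that $\phi$ has a positive infimum $a_\epsilon$ on the ``bad'' set $\mathcal{\hat Z}_{\epsilon}^{c}$ (points in the sublevel set farther than $\epsilon$ from $\mathcal{Z}^\star$), telescopes the expected decrease to bound $\sum_t \prob\left(Z^t \in \mathcal{\hat Z}_{\epsilon}^{c}\right)$ by $\bigl(F(z^0)-F^\star\bigr)/a_\epsilon$, and invokes Borel--Cantelli to conclude the iterates leave $\mathcal{\hat Z}_{\epsilon}^{c}$ all but finitely often, for every $\epsilon$. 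You instead telescope the per-block improvements $\Delta_i(z^t)$, obtain their almost-sure summability from $P_i>0$, and then pass to accumulation points, using continuity of $\Delta_i$, compactness of the sublevel set, and the same ``block optimal $\Rightarrow$ global optimal'' fact (the paper's Lemma~\ref{lem:basic-properties}, item~4, which both proofs need, together with coercivity under Assumption~\ref{th:connected-assumption}). What the paper's route buys is a uniform, $\epsilon$-indexed bound on how often the iterates can be far from $\mathcal{Z}^\star$, which is then recycled almost verbatim (with $b_\epsilon$) to prove the finite-iteration bound of Theorem~\ref{th:nr-iter-convergence}; your route buys a cleaner martingale-style argument that sidesteps the need for a positive infimum of the improvement over $\mathcal{\hat Z}_{\epsilon}^{c}$ --- a point the paper's Lemma~\ref{lem:expected-improvement-properties} asserts rather quickly, since positivity of an infimum really requires compactness of $\mathcal{\hat Z}_{\epsilon}^{c}$ plus continuity of $\phi$, not just pointwise positivity. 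One small repair to your write-up: you justify continuity of $\Delta_i$ by Berge's maximum theorem ``applied to the compact block constraints,'' but the block is not compact --- $x_i$ ranges over all of $\reals^p$; continuity follows instead because partial minimization of the jointly convex, real-valued $F$ over the block (whose minimum is attained by coercivity of the Huber-of-norm terms in $x_i$) yields a finite convex function of the remaining variables, hence continuous. With that substitution, and your explicit handling of the edge-variable ownership $y_{ij}=-y_{ji}$ so the node blocks are disjoint, the argument is sound.
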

The reader can find the proof in Appendix~\ref{sec:proof-theor-refth}.
It is also possible to state that, with probability one,
Algorithm~\ref{alg:asyncronous} converges in a \emph{finite} number of
iterations. The result is stated in the following theorem, also proven
in Appendix~\ref{sec:proof-theor-refth}.
\begin{theorem}
\label{th:nr-iter-convergence}
For a prescribed precision~$\epsilon$, the sequence of
iterates~$\{z^{t}\}_{t \in \naturals}$ converges in~$K_{\epsilon}$
iterations. The expected value of this number of iterations is
  \begin{equation}
    \label{eq:nr-iter-convergence}
    \expect \left[ K_{\epsilon} \right] \leq \frac{F\left(z^{0}\right)
    - F^{\star}}{b_{\epsilon}},
  \end{equation}
where~$b_{\epsilon}$ is a constant that depends on the
specified~$\epsilon$.
\end{theorem}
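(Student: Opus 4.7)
My plan is to combine the block-exact structure of Algorithm~\ref{alg:asyncronous} with a compactness argument to prove a uniform strict descent outside the $\epsilon$-suboptimal set, and then telescope until the first visit to that set.

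First, step~\ref{alg:asynminx} replaces the block $(x_i,\{y_{ij}\}_{j\in N_i},\{w_{ik}\}_{k\in\mathcal{A}_i})$ by the exact minimizer of~\eqref{eq:sl-problem}, so $F$ is nonincreasing along the iterates. Defining
\[
\Delta_i(z) := F(z) - \min\bigl\{F(z'):\, z'\text{ agrees with }z\text{ outside block }i\bigr\}\ge 0,
\]
the law of total expectation together with~\eqref{eq:rv} yields
\[
\expect\bigl[F(z^t)-F(z^{t+1})\,\big|\,z^t\bigr]=\sum_{i\in\mathcal V} P_i\,\Delta_i(z^t).
\]

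Second, I would argue that $\Phi(z):=\sum_i P_i\Delta_i(z)>0$ for every $z\in\mathcal{Z}\setminus\mathcal{Z}^\star$. Because $F$ is convex and $\mathcal{Z}$ is block-separable across nodes (the ball constraints on $y_{ij}$ and $w_{ik}$ are fully contained in the block of node $i$ in the per-node reformulation~\eqref{eq:per-node}), block-wise first-order optimality in every block is equivalent to joint first-order optimality for Problem~\eqref{eq:cvx-matrix-z}, hence to membership in $\mathcal{Z}^\star$; combined with $P_i>0$ this makes $\Phi$ strictly positive off the solution set. To upgrade this pointwise statement to a uniform lower bound, I use that monotonicity of $F$ and coercivity of $F$ on $\mathcal{Z}$ (afforded by the anchor term under Assumption~\ref{th:connected-assumption} together with the ball constraints on $y,w$) confine the trajectory to a compact sublevel set $\mathcal{B}\subset\mathcal{Z}$. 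Berge's maximum theorem, applied to the continuous objective~\eqref{eq:sl-cost} on the compact feasible balls (which do not depend on the frozen blocks), makes each $\Delta_i$ continuous, hence $\Phi$ is continuous on $\mathcal{B}$. The set $\mathcal{Z}_\epsilon:=\{z\in\mathcal Z : F(z)-F^\star\geq\epsilon\}\cap\mathcal B$ is compact, so
\[
b_\epsilon := \min_{z\in\mathcal{Z}_\epsilon}\Phi(z) > 0,
\]
with an explicit $\epsilon$-dependence through the defining set.

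Finally, let $K_\epsilon := \inf\{t\ge 0 : F(z^t)-F^\star\le\epsilon\}$, which is a.s.\ finite by Theorem~\ref{th:asyn-convergence}. The process $M_t:=F(z^{t\wedge K_\epsilon})+b_\epsilon\,(t\wedge K_\epsilon)$ is a bounded-below supermartingale: for $t<K_\epsilon$ the iterate lies in $\mathcal{Z}_\epsilon$, so $\expect[F(z^{t+1})\mid z^t]\le F(z^t)-b_\epsilon$, and thus $\expect[M_{t+1}\mid z^t]\le M_t$. Optional stopping (or monotone convergence on the nonnegative one-step decrements) yields $F^\star+b_\epsilon\,\expect[K_\epsilon]\le F(z^0)$, which rearranges to~\eqref{eq:nr-iter-convergence}. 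The main obstacle is the second paragraph: verifying continuity of the per-block value gap $\Delta_i$ and the block-to-joint stationarity upgrade. Continuity follows from Berge's theorem because the feasible balls are fixed compact sets not coupling across blocks; the stationarity upgrade relies precisely on block separability of $\mathcal{Z}$ and convexity of $F$, so that block KKT conditions aggregate into joint KKT conditions. Once these technical points are in place, the rest is standard supermartingale telescoping.
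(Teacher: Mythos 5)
Your proof is correct and follows the same overall skeleton as the paper's: decompose the expected one-step decrease as $\sum_i P_i\,\Delta_i(z)$ (your $\Phi$ is exactly the paper's $\phi=-\psi$ in~\eqref{eq:rel-improvement-2}), use block-optimal $\Rightarrow$ globally optimal for the convex, block-separably constrained problem to get strict positivity off the solution set, obtain a uniform lower bound $b_\epsilon$ on the expected decrease while $F(z^t)\geq F^\star+\epsilon$, and telescope until the first hitting time. Where you genuinely diverge is in how the two weak points of this skeleton are handled. First, the paper obtains $b_\epsilon$ by asserting ``the same arguments as in Lemma~\ref{lem:expected-improvement-properties}'', whose proof only gives pointwise positivity of $\phi$ and then jumps to a positive infimum; your route --- monotone descent from exact block minimization, coercivity of $F$ (Lemma~\ref{lem:basic-properties}) confining the trajectory to a compact sublevel set, continuity of the block value gaps, and a minimum over a compact set --- is the argument actually needed to make that step rigorous, so your version buys a sounder justification of $b_\epsilon>0$. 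Second, you replace the paper's modified-sequence bookkeeping ($\tilde z^t$ and summing $\expect[\psi(\tilde Z^t)]$) with a stopped supermartingale $M_t=F(z^{t\wedge K_\epsilon})+b_\epsilon (t\wedge K_\epsilon)$ and optional stopping; this is equivalent but cleaner, and it even yields $\expect[K_\epsilon]<\infty$ directly without invoking Theorem~\ref{th:asyn-convergence}.

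One technical wrinkle in your continuity step: you invoke Berge's maximum theorem ``on the compact feasible balls'', but the block of node $i$ also contains $x_i$, which is unconstrained in $\reals^p$, so the block-feasible correspondence is not compact-valued as stated. This is repairable in a routine way --- with neighbors frozen at points of the compact sublevel set, $F$ is coercive in $x_i$ (each Huber-composed-with-norm term grows at least linearly, and node $i$ has a neighbor or anchor by Assumption~\ref{th:connected-assumption}), so the inner minimization can be restricted to a fixed large ball uniformly over the compact set, after which Berge applies; alternatively, convexity plus uniform level-boundedness gives continuity of the value function directly. With that sentence added, your argument is complete.
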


\section{Numerical experiments}
\label{sec:numer-exper}

\subsection{Underestimator performance}
\label{sec:under-perf}

We assess the performance of the three considered loss functions
through simulation. The experimental setup consists in a uniquely
localizable geometric network deployed in a square area with side
of~$1$Km, with four anchors (blue squares in
Figure~\ref{fig:estimates}) located at the corners, and ten sensors,
(red stars). Measurements are also visible as dotted green lines. The
average node degree\footnote{To characterize the network we use the
  concepts of \emph{node degree}~$k_{i}$, which is the number of edges
  connected to node~$i$, and \emph{average node degree}~$\langle k \rangle = 1/n
  \sum_{i=1}^{n}k_{i}$.} of the network is~$4.3$.  The regular noisy
range measurements are generated according to
\begin{align}
  \nonumber
  d_{ij} = | \|x_{i}^{\star} - x_{j}^{\star}\| + \nu_{ij} |,
  \\ \label{eq:noise}
  r_{ik} = | \|x_{i}^{\star} - a_{k}\| + \nu_{ik} |,
\end{align}
where~$x_{i}^{\star}$ is the true position of node~$i$,
and~$\{\nu_{ij} : i \sim j \in \mathcal{E}\} \cup \{\nu_{ik} : i \in
\mathcal{V}, k \in \mathcal{A}_{i}\}$
are independent Gaussian random variables with zero mean and standard
deviation~$0.04$, corresponding to an uncertainty of about~$40$m.
Node~$7$ is malfunctioning and all measurements related to it are
corrupted by Gaussian noise with standard deviation~$4$, corresponding
to an uncertainty of~$4$Km. The convex optimization problems were
solved with \texttt{cvx}~\cite{cvx}.  We ran~$100$ Monte Carlo trials,
sampling both regular and outlier noise.

The performance metric used to assess accuracy is the 
positioning error per sensor defined as
\begin{equation}
  \label{eq:error}
  \epsilon(m) =  \frac{\|\hat x(m) - x^{\star}\|}{|\mathcal{V}|},
\end{equation}
where~$\hat x(m)$ corresponds to the position estimates for all
sensors in Monte Carlo trial~$m$. The empirical mean of the
positioning error is defined as
\begin{equation}
  \label{eq:avgerror}
  \epsilon = \frac1{M} \sum_{m =1}^{M} \epsilon(m),
\end{equation}
where~$M$ is the number of Monte Carlo trials.
\begin{figure}[tb]
  \centering
  \includegraphics[width=\columnwidth]{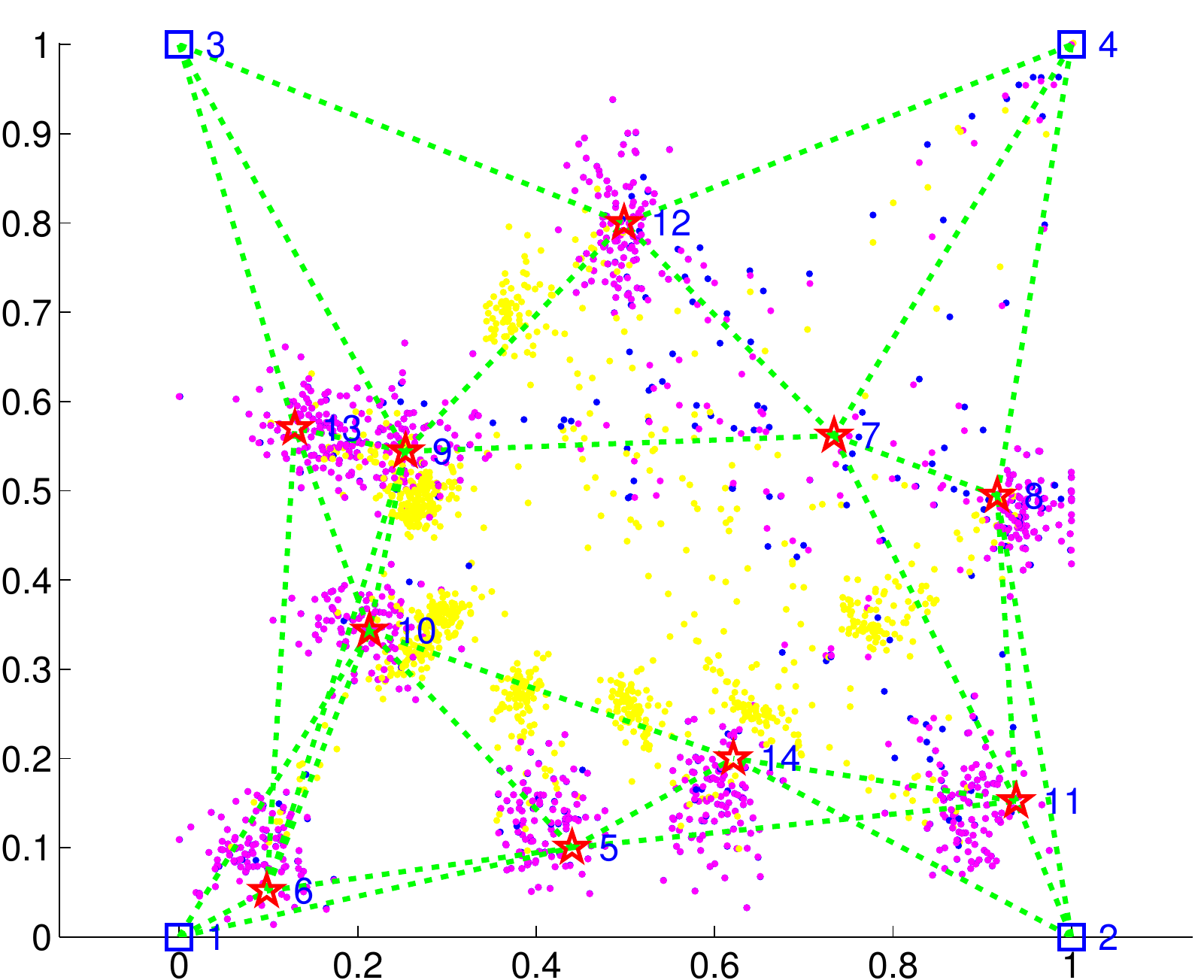}
  \caption{Underestimator performance: Estimates of sensor positions
    for the three loss functions; We plotted in yellow the monte carlo
    results of minimizing~$f_{|\cdot|}$, the $L_{1}$ loss; in blue we
    can see the estimates resulting from minimizing~$f_{Q}$, the
    quadratic loss; in the same way, magenta dots represent the output
    for function~$f_{R}$, the Huber loss. It is noticeable that
    the~$L_{1}$ loss is not able to correctly estimate positions whose
    measurements are corrupted with Gaussian noise. The perturbation
    in node~$7$ has more impact in the dispersion of blue
    dots than magenta dots around its neighbors.}
  \label{fig:estimates}
\end{figure}
In Figure~\ref{fig:estimates} we can observe that clouds of estimates
from~$g_{R}$ and~$g_{Q}$ gather around the true positions, except for
the malfunctioning node~$7$. Note the increased spread of blue dots
around nodes with edges connecting to node~$7$, indicating that~$g_{R}$
better preserves the nodes' ability to localize themselves, despite
their confusing neighbor, node~$7$.
\begin{figure}[tb]
  \centering
  \includegraphics[width=\columnwidth]{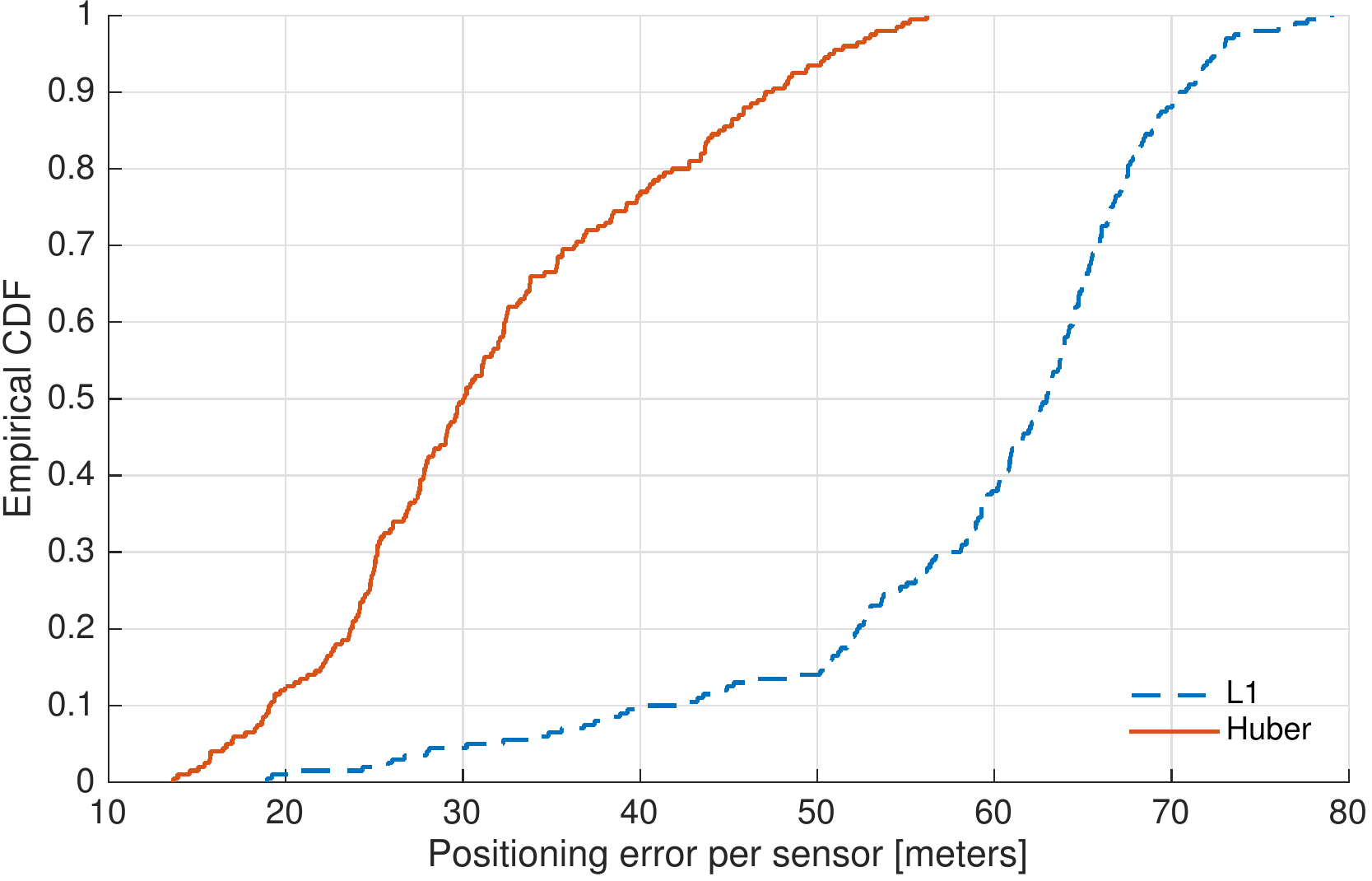}
  \caption{Underestimator performance: Empirical CDF for the
    positioning error per sensor, in meters, for the Gaussian outlier
    noise experiment.}
  \label{fig:error}
\end{figure}
This intuition is confirmed by the empirical CDFs of estimation errors
shown in Figure~\ref{fig:error}, which demonstrate that the Huber
robust cost can reduce the error per sensor by an average of~$28.5$
meters, when compared with the~$L_{1}$ discrepancy.  Also, as
expected, the malfunctioning node cannot be positioned by any of the
algorithms.
\begin{figure}[tb]
  \centering
  \includegraphics[width=\columnwidth]{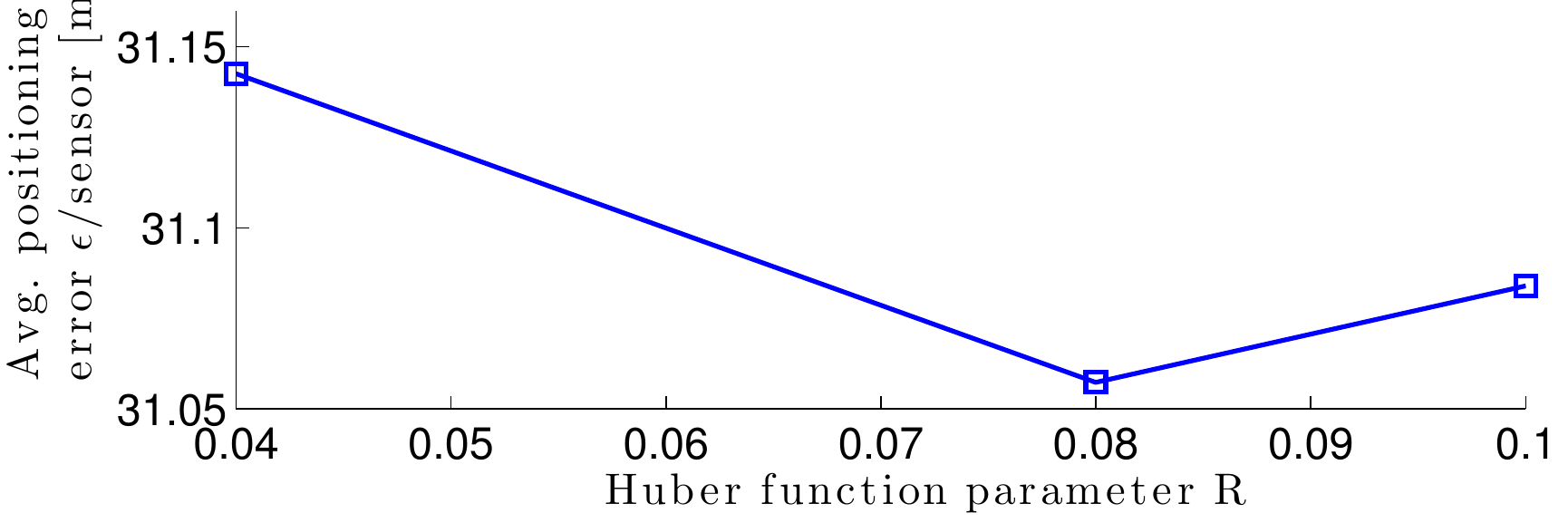}
  \caption{Underestimator performance: Average positioning error
    \textit{versus} the value of the Huber function parameter~$R$.
    Accuracy is maintained for a wide range of parameter values.}
  \label{fig:varR}
\end{figure}
The sensitivity to the value of the Huber parameter~$R$
in~\eqref{eq:huber-loss} is only moderate, as shown in
Figure~\ref{fig:varR}. In fact, the error per sensor of the proposed
estimator is always the smallest for all tested values of the
parameter. We observe that the error increases when~$R$ approaches the
standard deviation of the regular Gaussian noise, meaning that the
Huber loss gets closer to the~$L_{1}$ loss and, thus, is no longer
adapted to the regular noise ($R=0$ corresponds exactly to the~$L_{1}$
loss); in the same way, as~$R$ increases, so does the quadratic
section, and the estimator gets less robust to outliers, so, again,
the error increases.

Another interesting experiment is to see what happens when the faulty
sensor produces measurements with consistent errors or bias. We
ran~$100$ Monte Carlo trials in the same setting, but node~$7$
measurements are now consistently~$10\%$ of the real distance to each
neighbor.
\begin{figure}[tb]
  \centering
  \includegraphics[width=\columnwidth]{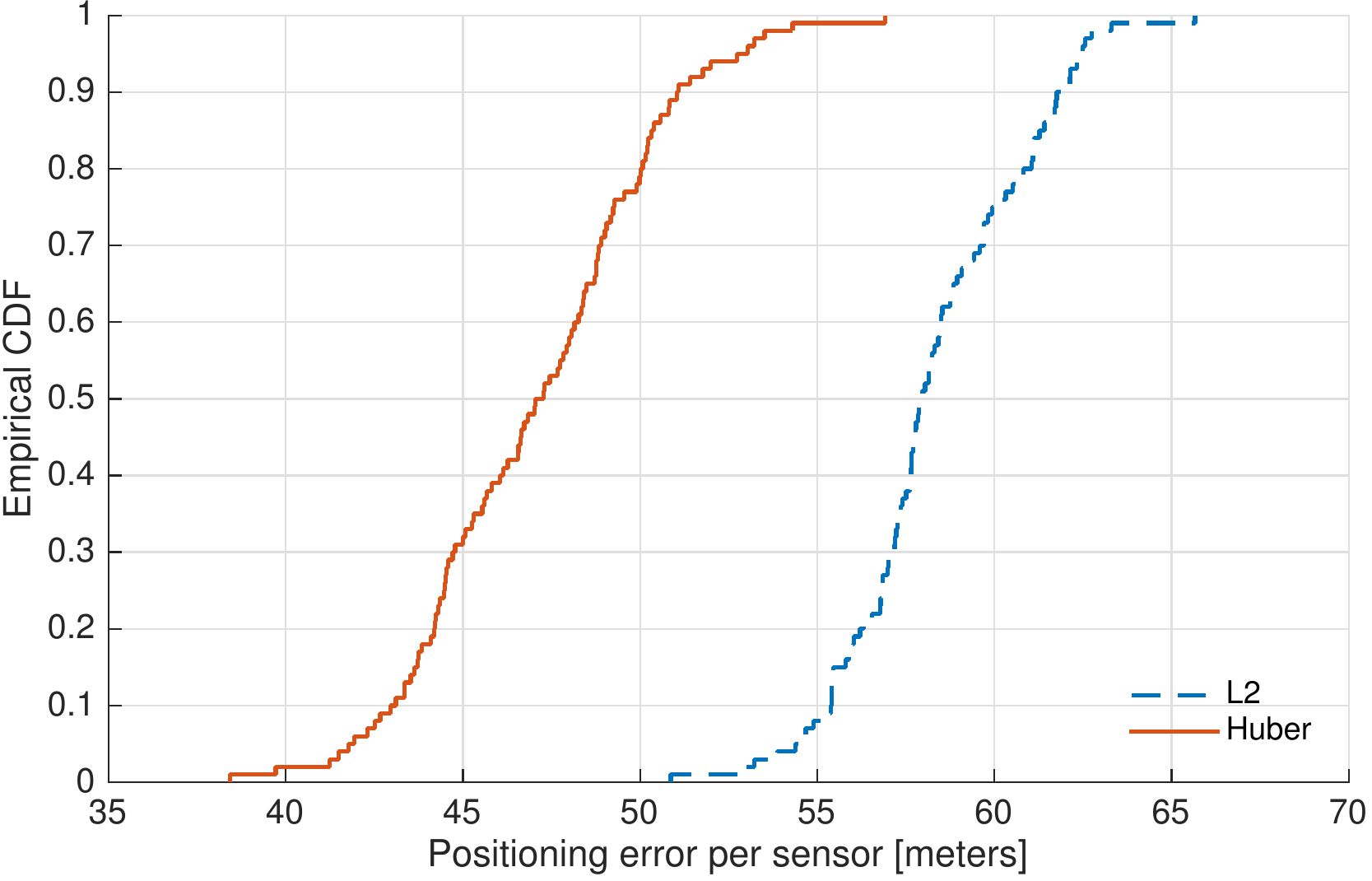}
  \caption{Underestimator performance: Empirical CDF for the
    positioning error per sensor, in meters, for the biased
    node experiment.}
  \label{fig:error-bias}
\end{figure}
The empirical CDF for the positioning error per sensor is shown in
Figure~\ref{fig:error-bias}. Here we observe a significant performance
gap between the alternative costs --- in average about~$11.2$ meters
--- so the Huber formulation proves to be superior even with biased
sensors.

\subsection{Performance of the distributed synchronous
  Algorithm~\ref{alg:synchronous}}
\label{sec:perf-distr-algor}

We tested Algorithm~\ref{alg:synchronous} using the same setup as in
the previous section, with node~$7$ contaminated with added Gaussian
noise with standard deviation of~$4$.
\begin{figure}[tb]
  \centering
  \includegraphics[width=\columnwidth]{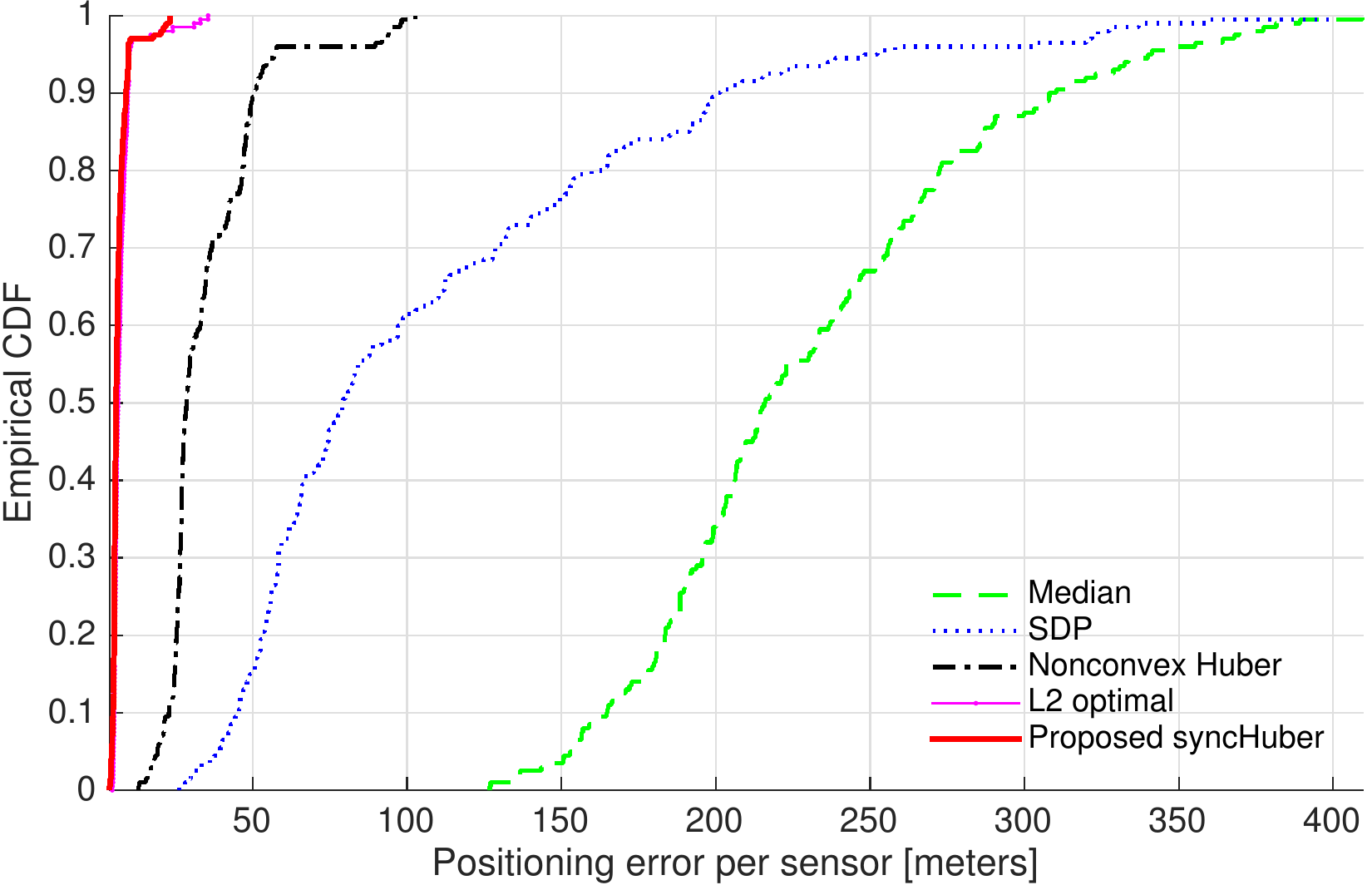}
  \caption{Accuracy of the distributed synchronous algorithm:
    Empirical CDF for the positioning error per sensor, in meters, for
    the Gaussian outlier noise experiment, discarding the positioning
    error of the malfunctioning node.}
  \label{fig:cdf-distr-median-SDP-L2-H-Gauss-noise}
\end{figure}
We benchmark our method comparing with the performance of the
centralized solutions in O\u{g}uz-Ekim
\ea,~\cite{OguzGomesXavierOliveira2011}, which we denote as ``Median''
below, the SDP presented by Simonetto and
Leus\footnote{In~\cite{SimonettoLeus2014}, the authors present a
  distributed ESDP algorithm which is a relaxation of the centralized
  SDP. As the simulation time for the distributed, edge-based
  algorithm is considerable we benchmarked against the tighter and
  more accurate centralized SDP solution.}~\cite{SimonettoLeus2014}, and
also the distributed locally convergent algorithm by Korkmaz and Van der
Veen\footnote{This distributed method attacks directly the nonconvex
  cost~\eqref{eq:huberDist}, thus delivering a local solution,
  that depends on the initialization point. The algorithm was initialized
  with Gaussian noise.}~\cite{KorkmazVeen2009}. The results are
summarized in Figure~\ref{fig:cdf-distr-median-SDP-L2-H-Gauss-noise}. Here
the empirical CDFs of the positioning error~\eqref{eq:error} show a
superior accuracy of our syncHuber algorithm.

When analyzing the results for the biased experiment as described in
the previous section, it is noticeable that the syncHuber algorithm
beats the
state-of-the-art~\cite{soares2014simple} for the quadratic
discrepancy by more than 5 meters per sensor in average positioning
error, as depicted in Figure~\ref{fig:cdf-distr-SDP-L2-H}.
\begin{figure}[tb]
  \centering
  \includegraphics[width=\columnwidth]{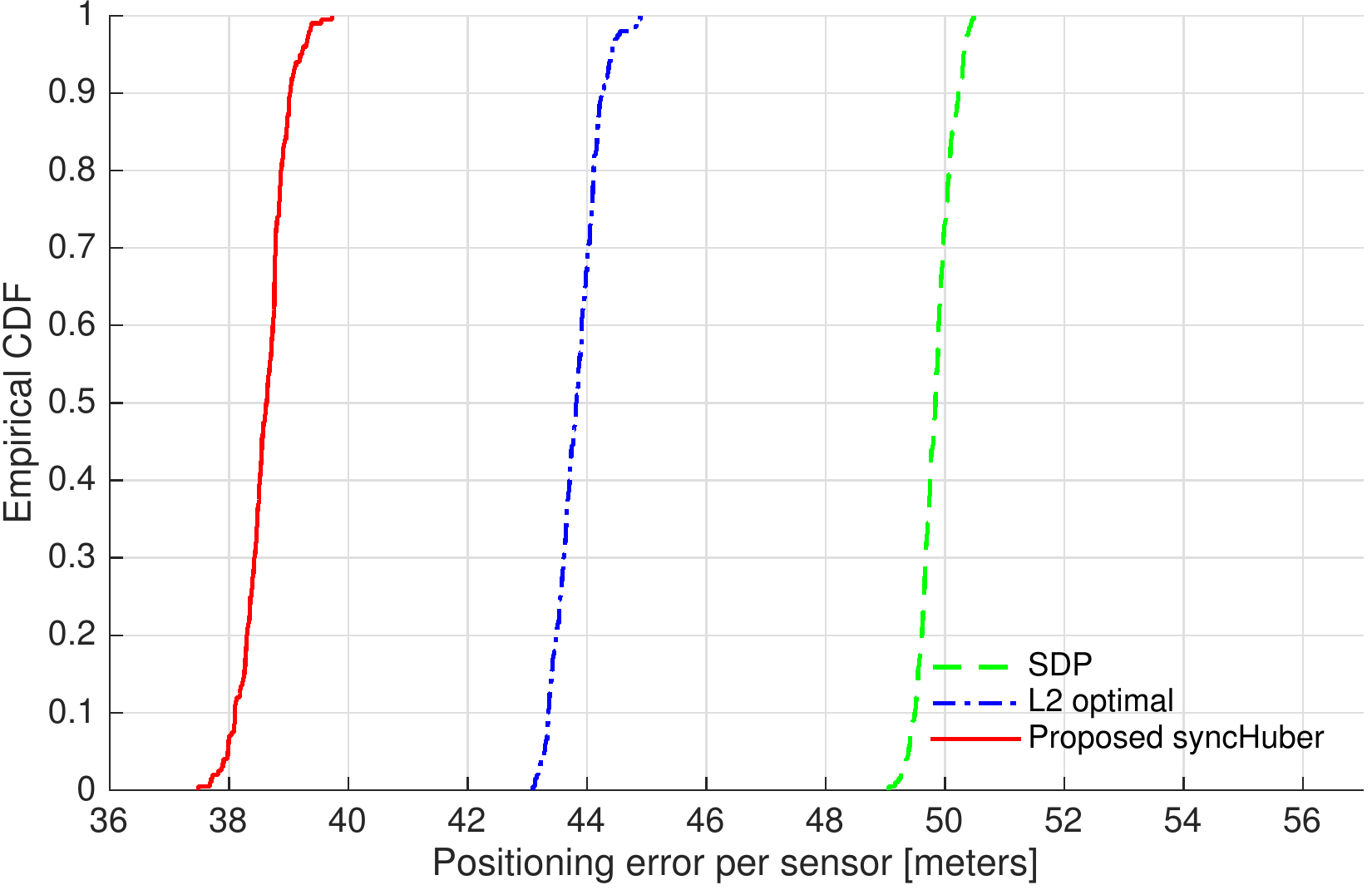}
  \caption{Accuracy of the distributed synchronous algorithm:
    Empirical CDF for the positioning error per sensor, in meters, for
    the biased node experiment discarding the positioning error of the
    malfunctioning node.}
  \label{fig:cdf-distr-SDP-L2-H}
\end{figure}
As expected from the results in the previous section, when we compare
to a~$L_{1}$-type algorithm --- in this case the ``Median'' from
O\u{g}uz-Ekim \ea~\cite{OguzGomesXavierOliveira2011} ---
\begin{figure}[tb]
  \centering
  \includegraphics[width=\columnwidth]{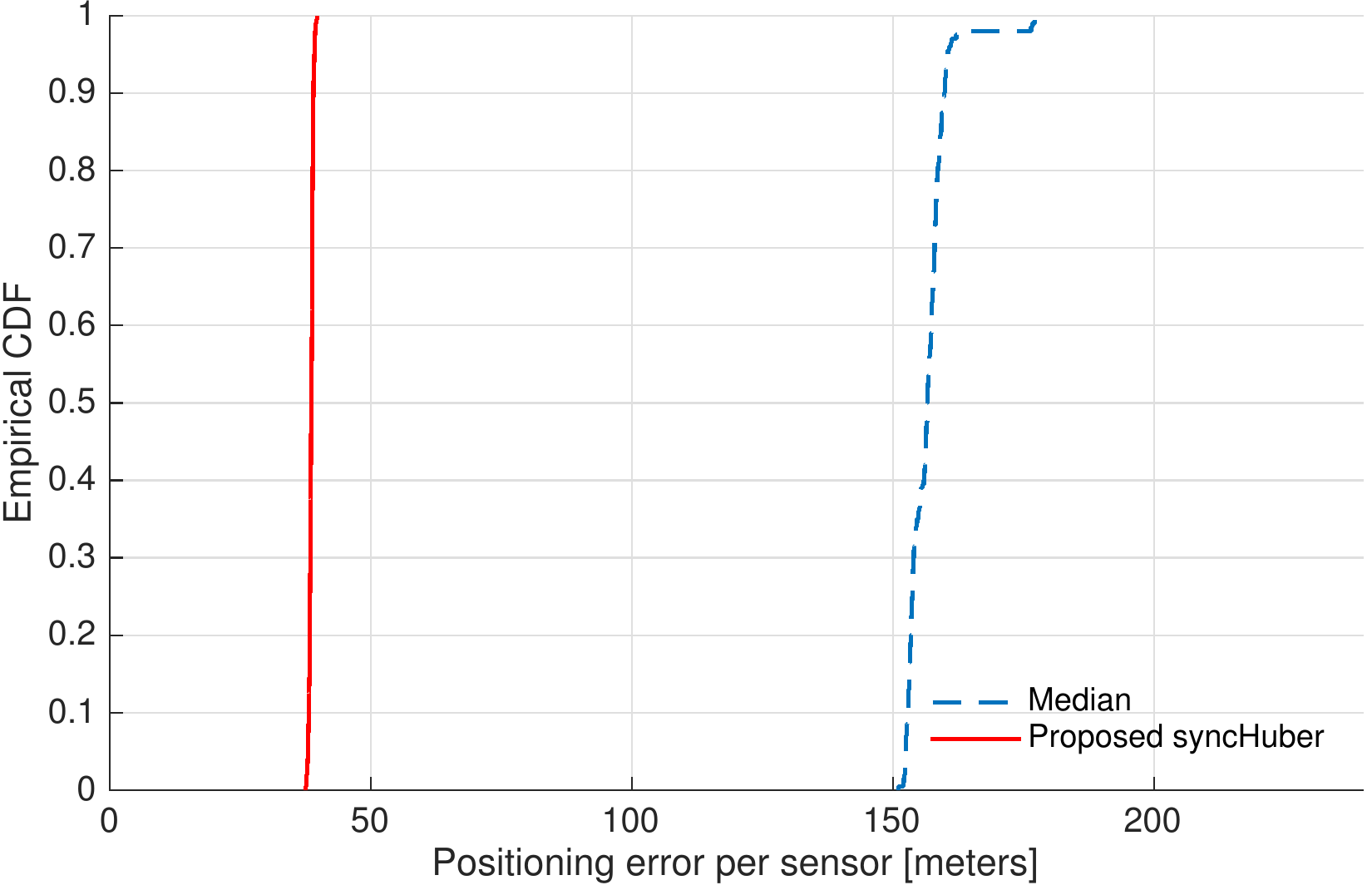}
  \caption{Accuracy of the distributed synchronous algorithm:
    Empirical CDF for the positioning error per sensor, in meters, for
    the biased experiment di
scarding the positioning error of the
    malfunctioning node.}
  \label{fig:cdf-distr-median-H}
\end{figure}
the improvement of performance of our solution is outstanding (on
average about 120 metrs per sensor), as depicted in
Figure~\ref{fig:cdf-distr-median-H}.

\subsection{Performance of the distributed asynchronous
  Algorithm~\ref{alg:asyncronous}}
\label{sec:perf-asynchr-algor}

Here, we tested Algorithm~\ref{alg:asyncronous}, asyncHuber, using the
same setup as in the previous sections, with node~$7$ contaminated
with added Gaussian noise with standard deviation of~$4$. We
benchmarked it against the synchronous
Algorithm~\ref{alg:synchronous}, syncHuber, since both minimize the
same cost function. The algorithms were allowed to run with the same
communication load.
\begin{figure}[tb] 
  \centering
  \includegraphics[width=\columnwidth]{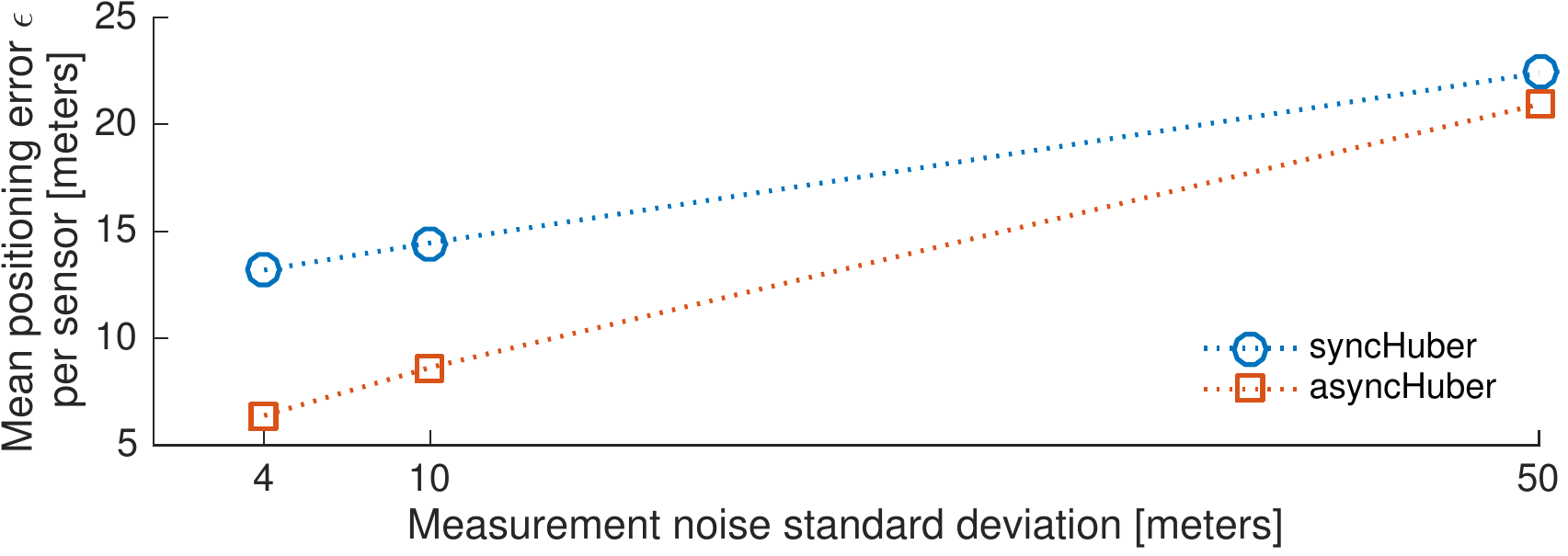}
  \caption{Accuracy of the distributed asynchronous algorithm: Mean
    positioning error of synchronous algorithm~\ref{alg:synchronous}
    versus asynchronous algorithm~\ref{alg:asyncronous}, discarding
    the positioning error of the malfunctioning node. Both algorithms
    were run with the same communication load.}
  \label{fig:syncVSasync}
\end{figure}
The mean positioning error for the considered noise levels is depicted
in Figure~\ref{fig:syncVSasync}. We can observe that the asyncHuber
algorithm fares better than syncHuber for the same communication
amount. This is an interesting phenomenon empirically observed in
different optimization algorithms when comparing deterministic and
randomized versions. In fact, Bertsekas and
Tsitsiklis~\cite[Section~6.3.5]{BertsekasTsitsiklis1989} provide a
proof of this behavior for a restricted class of algorithms.
\begin{figure}[tb] 
  \centering
  \includegraphics[width=\columnwidth]{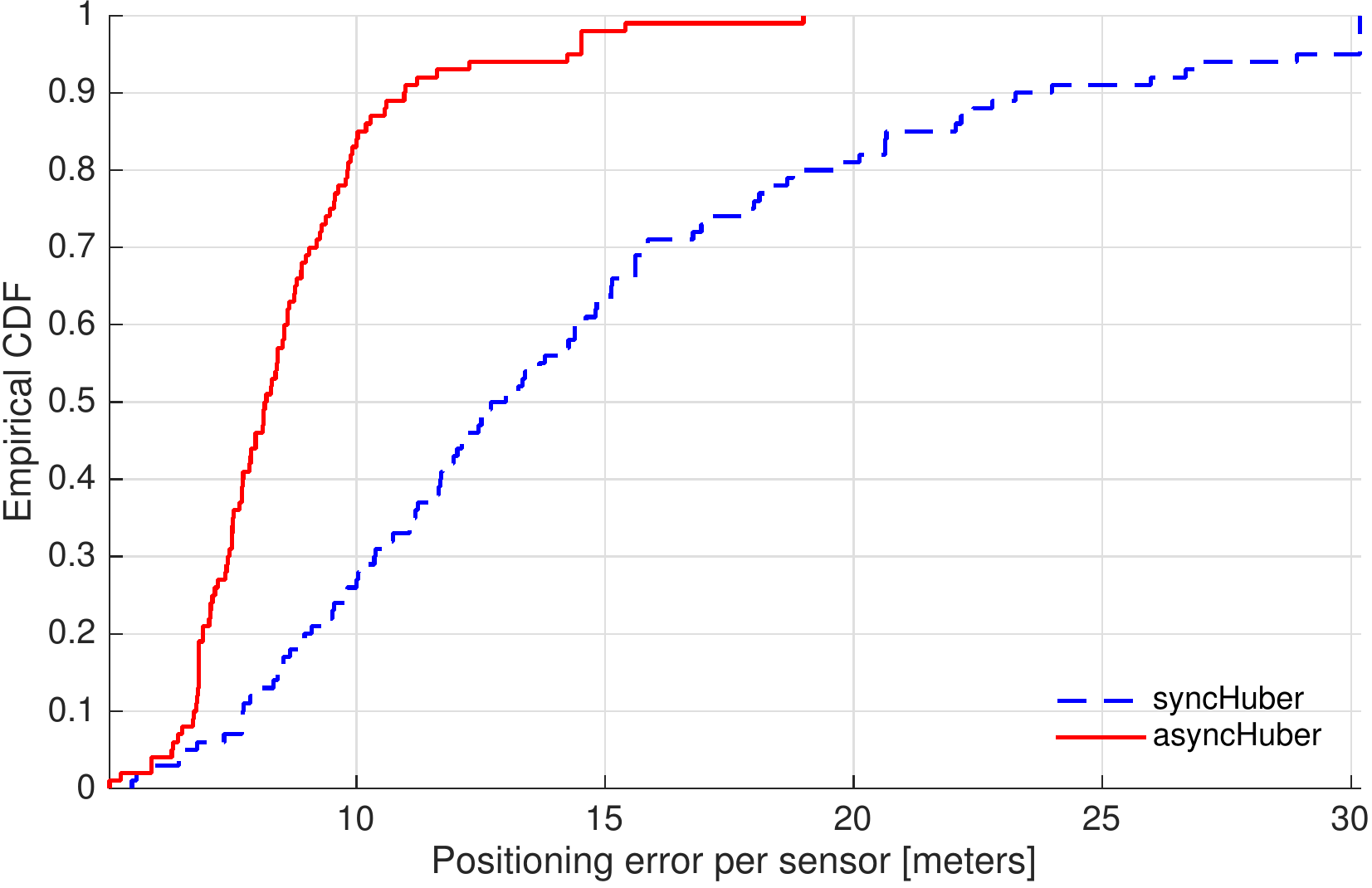}
  \caption{Accuracy of the distributed asynchronous algorithm: CDF of the
    positioning error of synchronous
    algorithm~\ref{alg:synchronous} versus asynchronous
    algorithm~\ref{alg:asyncronous}, discarding the positioning error
    of the malfunctioning node. Both algorithms were run with the same
    communication load. Experiment with measurements contaminated by
    medium power noise ($\sigma=0.01$), corresponding to 10 meters of
    standard deviation for a square with 1 Km sides.}
  \label{fig:cdf-sync-vs-async-sd2}
\end{figure}
Figure~\ref{fig:cdf-sync-vs-async-sd2} further explores the
experimental data, by examining the CDF of the positioning error for
the tested Monte Carlo trials. Here we see the superior accuracy of
the asynchronous Huber Algorithm~\ref{alg:asyncronous}, for the same
communications volume.  We must, nevertheless, emphasize that this
result does not correspond to a faster algorithm, in terms of running
time: syncHuber in one iteration updates all of the nodes positions in
parallel, and broadcasts the current estimates across neighbors,
whereas in asyncHuber only one node operates at a time. As the
wireless medium might be much more intensively used for synchronous
updates than for random gossip interactions, it seems entirely
possible that for the same operation time syncHuber will outperform
asyncHuber --- at the expense of greater overall power consumption.

\section{Discussion and conclusions}
\label{sec:concl-future-work}

We presented two distributed, fast, and robust localization algorithms
that take noisy ranges and a few anchor locations, and output accurate
estimates of the node positions. We approximated the difficult,
nonconvex problem based on the Huber discrepancy
in~\eqref{eq:snlOptProb} with a convex envelope of terms, robust to
outliers. How does the
Huber-based approximation in~\eqref{eq:huber-cvx} compares with
similar $L_{1}$ and $L_{2}$ underestimators, frequent in robust
estimation contexts?  A smaller optimality gap means a more robust
approximation~\cite{DestinoAbreu2011}: We designed a bound that
certifies the gap between the nonconvex and surrogate optimal values
for Huber, $L_{1}$ and $L_{2}$ and shows a tighter gap in the Huber
case. A numerical analysis of a star network in 1D unveiled that the
optimality gap for the Huber approximation was one order of magnitude
less than the quadratic or absolute value convexified problems, with
respect to their nonconvex counterparts. Numerical network
localization trials verify the surrogate robust behavior under
different types of outlier noise.
In order to develop a distributed method we needed to transform our
cost. So, we proposed a new representation of the Huber function
composed with a norm, and arrived at a novel distributed gradient
method, syncHuber, with optimal convergence rate. But our syncHuber
algorithm requires synchronization, which is a difficult demand for
many applications. Thus, we put forward a novel asynchronous method
for robust network localization, asyncHuber, converging with
probability one. Nevertheless, like any other relaxation method,
ours are prone to the anchor convex hull problem: preliminary results
show the positioning accuracy degrades --- albeit graciously --- when
nodes' positions depart from the anchor convex hull. Arguably, this is
not a big issue because engineers in general can control the choice or
placement of anchoring landmarks, and can delimit the area under
survey.

In sum, both our algorithms work with simple computations at each node
and minimal communication payloads, have provable convergence and show
a superior performance in our numerical experiments. Also, they do not
require knowledge of a Hamiltonian path in the network, which
simplifies real-world implementation, unlike the method presented by
Yousefi \ea~\cite{YousefiChangChampagne2014}. In average, the
positioning error of Algorithm~\ref{alg:synchronous} is less 120m for
a deployment in a square of 1Km sides than the state-of-the-art
$L_{1}$ centralized method for robust network localization of
O\u{g}uz-Ekim~\ea~\cite{OguzGomesXavierOliveira2011}.

\section*{Acknowledgment}
The authors would like to thank Pinar O\u{g}uz-Ekim and Andrea
Simonetto for providing the MATLAB implementations of their published
algorithms. Also, we thank Jo\~{a}o Xavier for the interesting
discussions during this research.

\appendices

\section{Proof of Proposition~\ref{prop:cvx-term-norm}}
\label{sec:proof-prop-cvx-term-norm}
We define a function~$\phi(u) = \max\{0,h_{R_{ij}}(u)\}$, and restate
a generic term of the first summation in~\eqref{eq:huber-cvx}
\begin{equation*}
  h_{R_{ij}}(s(\|x_{i}-x_{j}\| - d_{ij}))
\end{equation*}
as
\begin{equation*}
 \phi(s(\|x_{i}-x_{j}\| - d_{ij})),
\end{equation*}
which represents the same mathematical object,
because~$s(\|x_{i}-x_{j}\| - d_{ij})$ is always nonnegative.
Now, we prove the equivalence relation~\eqref{eq:cvx-term-norm},
beginning by 
\begin{equation}
  \label{eq:proof-cvx-term-inf}
  \phi(s(\|x_{i}-x_{j}\| - d_{ij})) \leq \inf_{\|y_{ij}\| \leq d_{ij}} \phi(\|x_{i}-x_{j}-y_{ij}\|).
\end{equation}
We choose any~$\bar{y}_{ij} : \|\bar{y}_{ij}\| \leq d_{ij}$, and note that
\begin{equation*}
  \begin{aligned}
    s(\|x_{i}-x_{j}\| - d_{ij}) &= \inf_{\|y_{ij}\| \leq d_{ij}}
    \|x_{i}-x_{j}-y_{ij} \|\\
&\leq \|x_{i}-x_{j}-\bar{y}_{ij} \|.
  \end{aligned}
\end{equation*}
As~$\phi$ is nondecreasing, 
\begin{equation*}
  \phi(s(\|x_{i}-x_{j}\| - d_{ij})) \leq \phi(\|x_{i}-x_{j}-\bar{y}_{ij} \|),
\end{equation*}
for all~$\bar{y}_{ij}$, and in particular,
 \begin{equation*}
  \phi(s(\|x_{i}-x_{j}\| - d_{ij})) \leq \inf_{\|y_{ij}\| \leq d_{ij}}\phi(\|x_{i}-x_{j}-{y}_{ij} \|),
\end{equation*}
which proves~\eqref{eq:proof-cvx-term-inf}.
We now establish 
\begin{equation}
  \label{eq:proof-cvx-term-inf-2}
  \phi(s(\|x_{i}-x_{j}\| - d_{ij})) \geq \inf_{\|y_{ij}\| \leq d_{ij}} \phi(\|x_{i}-x_{j}-y_{ij}\|).
\end{equation}
We choose~$y_{ij}^{\star}$, a minimizer of the optimization problem in
the RHS of~\eqref{eq:proof-cvx-term-inf-2}. We know that
\begin{equation*}
  s(\|x_{i}-x_{j}\| - d_{ij}) = \|x_{i}-x_{j}-y^{\star}_{ij}\|
\end{equation*}
and so
\begin{equation*}
  \phi(s(\|x_{i}-x_{j}\| - d_{ij})) = \phi(\|x_{i}-x_{j}-y^{\star}_{ij}\|),
\end{equation*}
and as~$\phi$ is monotonic,
\begin{equation*}
  \phi(\|x_{i}-x_{j}-y^{\star}_{ij}\|) \leq \phi(\|x_{i}-x_{j}-y_{ij}\|),
\end{equation*}
for all~$y_{ij} : \| y_{ij}\| \leq d_{ij}$. In particular,
\begin{equation*}
   \phi(\|x_{i}-x_{j}-y^{\star}_{ij}\|) \leq \inf_{\|y_{ij}\| \leq d_{ij}} \phi(\|x_{i}-x_{j}-y_{ij}\|),
\end{equation*}
which proves~\eqref{eq:proof-cvx-term-inf-2}, and concludes the proof
of Proposition~\ref{prop:cvx-term-norm}.

\section{Proofs of Theorems~\ref{th:asyn-convergence} and~\ref{th:nr-iter-convergence}}
\label{sec:proof-theor-refth}
\subsection{Definitions}
\label{sec:definitions}
First, we review the definition of a block optimal point and describe
some useful mathematical objects used on the proofs.  

\begin{definition}
\label{def:block-optimal}
A point $z^{\bullet} = (z^{\bullet}_{i})_{i \in \mathcal{V}}$ is
\emph{block optimal} for the function $F$
in~\eqref{eq:matrix-cost} if, for all $i$, $z^{\bullet}_{i} \in
\argmin_{w_{i} \in \mathcal{Z}_{i}} F(z^{\bullet}_{1}, \cdots, w_{i}, \cdots,
z^{\bullet}_{n})$~\cite{JakoveticXavierMoura2011}.
\end{definition}

We define the sets
\begin{eqnarray}
  \label{eq:Z-star}
  \mathcal{Z}^{\star} &=& \{z \in \mathcal{Z}: F(z) = F^{\star}\}\\
  \label{eq:X-epsilon}
  \mathcal{Z}_{\epsilon} &=& \left \{z : \mathrm{d}_{\mathcal{Z}^{\star}}(z) < \epsilon\right \}\\
  \label{eq:x-epsilon-c}
  \mathcal{Z}_{\epsilon}^{c} &=& \left \{z : \mathrm{d}_{\mathcal{Z}^{\star}}(z) \geq \epsilon\right \}\\
  \label{eq:sublevel-set}
  \mathcal{Z}_{F} &=& \left \{ z : F(z) \leq F\left(z^{0}\right) \right\}\\
  \label{eq:X-epsilon-c-f}
  \mathcal{\hat Z}_{\epsilon}^{c} &=& \mathcal{Z}_{F} \cap \mathcal{Z}_{\epsilon}^{c},
\end{eqnarray}
where $\mathcal{\hat Z}_{\epsilon}^{c}$ is the set of all points whose distance
to the optimal set $\mathcal{Z}^{\star}$ is larger than $\epsilon$, but also
belong to the sublevel set of $F$. We will see that the iterates
of Algorithm~\ref{alg:asyncronous} will belong to~$\mathcal{\hat Z}_{\epsilon}^{c}$
until they reach the absorbing set~$\mathcal{Z}_{\epsilon}$.
We also define the \emph{expected improvement function} as
\begin{equation}
  \label{eq:rel-improvement}
  \psi(z) = \expect \left [ F\left(Z(k+1) \right) | Z(k) = z \right] - F(z),
\end{equation}
and the coordinate optimal function as
\begin{equation}
  \label{eq:coord-opt-fn}
  F^{i}(z) = \min_{w_{i} \in \mathcal{Z}_{i}} F (z_{1}, \cdots, w_{i},\cdots, z_{n}).
\end{equation}
It is easy to see that the expected improvement $\psi$ can also be
written as
\begin{equation}
  \label{eq:rel-improvement-2}
  \psi(z) = \sum_{i=1}^{n} \left( F^{i}(z) - F(z) \right) P_{i},
\end{equation}
where $P_{i}$ is the probability of the event ``node~$i$ is awaken at
time~$t$'' (we recall the independence of the random
variables~$\chi_{t}$ defined in~\eqref{eq:rv}). For notational
convenience, we introduce the function
\begin{equation}
  \label{eq:phi}
  \phi(z) = - \psi(z),
\end{equation}
which, by construction of Algorithm~\ref{alg:asyncronous}, is always
non-negative.

\subsubsection{Auxiliary Lemmas}
\label{sec:auxil-lemmas}

The analysis is founded in
Lemma~\ref{lem:expected-improvement-properties}, where the symmetric
of the expected improvement is said to attain a positive infimum on
the set~$\mathcal{\hat Z}_{\epsilon}^{c}$. Lemma~\ref{lem:basic-properties} will
be instrumental in the proof of
Lemma~\ref{lem:expected-improvement-properties} but contains also some
useful properties of function~$F$ and the solution
set~$\mathcal{Z}^{\star}$.

\begin{lemma}[Basic properties]
  \label{lem:basic-properties}
  Let $F$ as defined in~\eqref{eq:matrix-cost}. Then the following
  properties hold.
  \begin{enumerate}
  \item $F$ is coercive;
  \item $F^{\star} \geq 0$ and $\mathcal{Z}^{\star} \neq
    \varnothing$;
  \item $\mathcal{Z}^{\star}$ is compact;
  \item If $z^{\bullet}$ is block optimal for $F$ in $\mathcal{Z}$, then it is
    global optimal for $F$ in $\mathcal{Z}$.
  \end{enumerate}
\end{lemma}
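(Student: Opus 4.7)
The plan is to handle the four items in order, leveraging the representation $F(z)=\tfrac{1}{2}\psi_{\tilde R}(Bz)+\tfrac{1}{2}\psi_{\tilde R a}(Ez-\alpha)$ from \eqref{eq:matrix-cost}, where each $\psi$ is nonnegative (a sum of Huber terms composed with norms, cf.\ \eqref{eq:huber-norm}). Item~(2) is then immediate: $F\ge 0$ gives $F^{\star}\ge 0$; once coercivity and continuity are established, Weierstrass on the closed set $\mathcal{Z}$ yields $\mathcal{Z}^{\star}\neq\varnothing$. Item~(3) follows by the same ingredients: $\mathcal{Z}^{\star}$ is the intersection of a level set of the continuous $F$ with the closed $\mathcal{Z}$, hence closed, and coercivity forces it to be bounded, so it is compact in the ambient Euclidean space.

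The only nontrivial ingredient is coercivity in item~(1). On the constraint set $\mathcal{Z}$ the variables $y_{ij}$ and $w_{ik}$ are a priori bounded, so it suffices to prove $F(z)\to\infty$ whenever $\|x\|\to\infty$. First I would note that $\psi_{R}$ grows at least linearly, namely $\psi_{R}(u)\ge 2R\|u\|-R^{2}$ for $\|u\|\ge R$, by inspection of \eqref{eq:huber-loss}. Hence, up to additive constants controlled by the bounded $y,w,\alpha$, $F(z)$ is bounded below by a positive multiple of $\|Ax\|+\|E_{x}x\|$ for large $\|x\|$. I would then invoke Assumption~\ref{th:connected-assumption} to argue that the stacked operator with blocks $A$ and $E_{x}$ has trivial kernel: since $A=C\otimes I$ with $C$ the arc-node incidence matrix of the connected graph $\mathcal{G}$, one has $\ker A=\mathrm{span}(\ones\otimes I)$ (simultaneous translations of all node positions), and the existence of at least one anchor measurement makes $E_{x}$ pick out a full $\reals^{p}$ coordinate, eliminating that translation freedom. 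Trivial kernel of this tall matrix gives a bounded left inverse, so $\|x\|\le C(\|Ax\|+\|E_{x}x\|)$, and the linear growth of Huber converts this into $F(z)\to\infty$ as $\|x\|\to\infty$.

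For item~(4), I would use convexity of $F$ together with the product structure $\mathcal{Z}=\prod_{i\in\mathcal{V}}\mathcal{Z}_{i}$, where $\mathcal{Z}_{i}$ collects the admissible $(x_{i},\{y_{ij}\}_{j\in N_{i}},\{w_{ik}\}_{k\in\mathcal{A}_{i}})$; the balls defining $\mathcal{Z}$ in \eqref{eq:constraint-set} are per-edge and per-anchor, so by the convention $y_{ij}=-y_{ji}$ they split cleanly across nodes. Block optimality of $z^{\bullet}$ at block $i$ translates, via Section~\ref{sec:lipschitz-constant} which guarantees differentiability, to the variational inequality $\langle\nabla_{i}F(z^{\bullet}),w_{i}-z^{\bullet}_{i}\rangle\ge 0$ for every $w_{i}\in\mathcal{Z}_{i}$. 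Summing over $i$ and using the product structure yields $\langle\nabla F(z^{\bullet}),w-z^{\bullet}\rangle\ge 0$ for every $w\in\mathcal{Z}$, and convexity of $F$ promotes this first-order condition to global optimality.

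The main obstacle I anticipate is the kernel argument underlying coercivity: one must carefully confirm that the single anchor measurement guaranteed by Assumption~\ref{th:connected-assumption} is enough to eliminate the $p$-dimensional translational nullspace of $A$, and that the Kronecker structure does not introduce extra kernel directions per coordinate (which is exactly why one scalar anchor measurement fixes a full $\reals^{p}$ component). Once this step is secured, items~(2)–(4) follow routinely from Weierstrass, convex analysis, and the product structure of $\mathcal{Z}$.
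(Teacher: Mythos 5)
Your proposal is correct, and items (2)--(4) run essentially along the paper's lines: nonnegativity plus attainment of the infimum via coercivity, continuity and closedness of $\mathcal{Z}$; compactness of $\mathcal{Z}^{\star}$ as a (bounded) closed sublevel-type set; and, for block optimality, stacking the per-block variational inequalities $\langle\nabla_{i}F(z^{\bullet}),w_{i}-z^{\bullet}_{i}\rangle\geq 0$ over the product structure of $\mathcal{Z}$ and invoking convexity --- exactly the paper's step. Where you genuinely diverge is item (1). The paper proves coercivity by a qualitative path argument: by Assumption~\ref{th:connected-assumption} every node has a path to an anchored node, and if $\|x_{i}\|\to\infty$ then either some edge $t\sim u$ on that path has $\|x_{t}\|\to\infty$ while $\|x_{u}\|\not\to\infty$, making the corresponding inter-node Huber term blow up, or all nodes on the path diverge and the anchor term at the path's end blows up. You instead bound each Huber-of-norm term below by the affine function $2R\|u\|-R^{2}$, reduce on $\mathcal{Z}$ (where $y$, $w$, $\alpha$ are bounded) to a lower bound proportional to $\|Ax\|+\|E_{x}x\|$, and then argue that $\ker A=\{\ones\otimes v : v\in\reals^{p}\}$ for the connected graph while the anchored node makes $E_{x}$ select a full $\reals^{p}$ block, so the stacked operator has trivial kernel and $\|Ax\|+\|E_{x}x\|\geq c\|x\|$. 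Both arguments use exactly the same hypotheses (connectivity plus one anchor measurement); yours is more structural and quantitative, yielding an explicit linear growth rate of $F$ in $\|x\|$ on $\mathcal{Z}$, at the price of the norm-equivalence/kernel machinery, whereas the paper's case analysis is more elementary and term-by-term. Your worry about the Kronecker structure is resolved just as you suspect: $E_{x}$ selects the entire vector $x_{j}\in\reals^{p}$ of the anchored node, so the translational kernel directions $\ones\otimes v$ are all eliminated.
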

\begin{proof}
  \begin{enumerate}[leftmargin=*,noitemsep,topsep=0pt,parsep=0pt,partopsep=0pt]
  \item By Assumption~\ref{th:connected-assumption} there is a path
    from each node~$i$ to some node~$j$ which is connected to an
    anchor~$k$. Also, we know that, by definition,~$\|y_{ij}\|$
    and~$\|w_{ik}\|$ are bounded by the ranges~$d_{ij} < \infty$
    and~$r_{ik} < \infty$. So these components of~$z$ will have no
    effect in the limiting behavior of~$F$. If $\|x_{i}\| \to \infty$
    there are two cases: (1) there is at least one edge $t \sim u$
    along the path from~$i$ to~$j$ where~$\|x_{t}\| \to \infty$
    and~$\|x_{u}\|\not \to \infty$, and so
    $h_{R_{tu}}(\|x_{t}-x_{u} - y_{tu}\|) \to \infty$; (2) if
    $\|x_{u}\| \to \infty$ for all~$u$ in the path between~$i$
    and~$j$, in particular we have~$\|x_{j}\| \to \infty$ and
    so~$h_{Ra_{jk}}(\|x_{j}-a_{k} - w_{jk}\|) \to \infty$, and
    in both cases~$F \to \infty$, thus,~$F$ is coercive.
  \item Function~$F$ defined in~\eqref{eq:matrix-cost} is a
    continuous, convex and real valued function lower bounded by zero;
    so, the infimum~$F^{\star}$ exists and is non-negative. To prove
    this infimum is attained
    and~$\mathcal{Z}^{\star} \neq \varnothing$, we observe that the
    set~$\mathcal{Z}$ is a cartesian product of the closed
    sets~$\reals^{np}$, $\mathcal{Y}$ and~$\mathcal{W}$, and
    so~$\mathcal{Z}$ is also closed. Now consider the
    set~$T_{\alpha} = \{z : F(z) \leq \alpha \}$; $T_{\alpha}$ is a
    sublevel set of a continuous, coercive function and, thus, it is
    compact. For some~$\alpha$, the intersection of~$T_{\alpha}$
    and~$\mathcal{Z}$ is nonempty and it is known that the
    intersection of a closed and a compact set is
    compact~\cite[Corollary to 2.35]{rudin1976principles},
    so~$T_{\alpha}\cap\mathcal{Z}$ is compact. As function~$F$ is
    convex, it is also continuous on the compact
    set~$T_{\alpha}\cap\mathcal{Z}$, and by the extreme value theorem,
    the value~$p = \inf_{z \in T_{\alpha}\cap\mathcal{Z}} F(z)$ is
    attained and it is obvious
    that~$\inf_{z \in T_{\alpha}\cap\mathcal{Z}} F(z) = \inf_{z \in
      \mathcal{Z}} F(z)$.
  \item $\mathcal{Z}^{\star}
    = T_{\alpha} \cap \mathcal{Z}$ for~$\alpha =
    F^{\star}$, and we deduced in the previous proof that~$ T_{\alpha}
    \cap \mathcal{Z}$ is compact.
  \item If~$z^{\bullet}$
    is block-optimal, then~$\langle
    \nabla F_i(z^\bullet_i), z_i-z_i^\bullet\rangle \geq
    0$ for all~$z_i \in \mathcal{Z}_i$ and for
    all~$i$.
    When stacking the inequalities for all~$i$,
    we get~$\langle
    \nabla{F}(z^\bullet),z-z^\bullet\rangle \geq
    0$, which proves the claim.\qedhere
  \end{enumerate}
\end{proof}

\begin{lemma}
\label{lem:expected-improvement-properties}
  Let~$\phi$ be defined as~\eqref{eq:phi}, taking values on the
  set~$\mathcal{\hat Z}_{\epsilon}^{c}$ in~\eqref{eq:X-epsilon-c-f}. Then,
  \begin{enumerate}
  \item Function $\phi$ is positive:
    \begin{equation}
      \label{eq:expected-improvement-property-1}
      \phi(z) > 0, \quad \text{for all }  z \in \mathcal{\hat Z}_{\epsilon}^{c};
    \end{equation}
  \item And, as a consequence, function~$\phi$ is bounded by a finite
    positive value~$a_{\epsilon}$:
    \begin{equation}
      \label{eq:expected-improvement-property-2}
      \inf_{z \in \mathcal{\hat Z}_{\epsilon}^{c}} \phi(z) = a_{\epsilon}.
    \end{equation}
  \end{enumerate}
\end{lemma}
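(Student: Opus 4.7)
The plan is to first establish positivity of $\phi$ pointwise using the block-optimality characterization, and then promote this to a strictly positive infimum via a compactness argument.

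For claim (1), I would exploit the decomposition \eqref{eq:rel-improvement-2} together with Lemma \ref{lem:basic-properties} (4). Writing $\phi(z) = \sum_{i=1}^{n} \bigl( F(z) - F^{i}(z) \bigr) P_{i}$ with $P_{i} > 0$, each summand is non-negative since minimizing $F$ over a single block can only decrease the value. If every summand vanished, then $F(z) = F^{i}(z)$ for all $i$ would make $z$ block-optimal, hence globally optimal by Lemma \ref{lem:basic-properties} (4), so $z \in \mathcal{Z}^{\star}$. But any $z \in \mathcal{\hat Z}_{\epsilon}^{c}$ satisfies $\mathrm{d}_{\mathcal{Z}^{\star}}(z) \geq \epsilon > 0$, placing $z$ outside $\mathcal{Z}^{\star}$ and forcing $\phi(z) > 0$.

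For claim (2), I would apply the extreme value theorem to $\phi$ on $\mathcal{\hat Z}_{\epsilon}^{c}$. Compactness of the domain follows from Lemma \ref{lem:basic-properties} (1): coercivity of $F$ plus closedness of $\mathcal{Z}$ makes $\mathcal{Z}_{F}$ compact; $\mathcal{Z}_{\epsilon}^{c}$ is closed as the complement of an open distance sublevel set; thus $\mathcal{\hat Z}_{\epsilon}^{c} = \mathcal{Z}_{F} \cap \mathcal{Z}_{\epsilon}^{c}$ is closed within a compact set, hence compact. Continuity of $\phi$ then reduces to continuity of each parametric value function $F^{i}$. With these two ingredients, the extreme value theorem yields $\inf_{z \in \mathcal{\hat Z}_{\epsilon}^{c}} \phi(z) = \phi(z^{\sharp}) > 0$ for some minimizer $z^{\sharp}$, which we label $a_{\epsilon}$.

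The main obstacle will be the continuity of $F^{i}$, because the $i$-th block contains the unbounded position variable $x_{i}$, so Berge's theorem does not apply off the shelf. I would address this by showing that on the compact set $\mathcal{\hat Z}_{\epsilon}^{c}$ the block-minimizer $(x_{i}^{\bullet}(z), \{y_{ij}^{\bullet}(z)\}, \{w_{ik}^{\bullet}(z)\})$ stays uniformly bounded: the $y_{ij}$ and $w_{ik}$ components are already constrained to balls, while coercivity of $F$ in $x_{i}$ (inherited from the argument in Lemma \ref{lem:basic-properties} (1), since any path to an anchor still forces blow-up) together with the uniform upper bound $F(z) \leq F(z^{0})$ on $\mathcal{Z}_{F}$ confines $x_{i}^{\bullet}(z)$ to a fixed compact ball. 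Restricted to that ball, the minimization is over a fixed compact set of a continuous function, and standard parametric-continuity arguments deliver continuity of $F^{i}$, closing the proof.
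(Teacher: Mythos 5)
Your proof of claim (1) is essentially the paper's own argument: assume $\phi(z)=0$, use the decomposition $\phi(z)=\sum_{i}\left(F(z)-F^{i}(z)\right)P_{i}$ with $P_{i}>0$ and the non-negativity of each summand to conclude $F^{i}(z)=F(z)$ for all $i$, i.e.\ block optimality, then invoke Lemma~\ref{lem:basic-properties}(4) to contradict $\mathrm{d}_{\mathcal{Z}^{\star}}(z)\geq\epsilon$. For claim (2), however, you take a genuinely different and in fact more careful route. The paper simply observes that $\phi$ is real valued and positive on $\mathcal{\hat Z}_{\epsilon}^{c}$ and asserts a positive infimum; read literally this is incomplete, since pointwise positivity on a set does not by itself force $\inf\phi>0$ (the infimum of a positive function can be zero without compactness and some form of continuity or lower semicontinuity). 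You supply exactly the missing ingredients: compactness of $\mathcal{\hat Z}_{\epsilon}^{c}$ (sublevel set of the coercive, continuous $F$ intersected with the closed set $\{z:\mathrm{d}_{\mathcal{Z}^{\star}}(z)\geq\epsilon\}$, all within the closed constraint set $\mathcal{Z}$) plus continuity of $\phi$, and then the extreme value theorem gives an attained, hence positive, infimum $a_{\epsilon}$. Your worry about continuity of the value functions $F^{i}$ and your fix via uniformly bounded block minimizers is sound, but there is a shortcut worth noting: partial minimization of the jointly convex $F$ over the convex block constraint set yields a convex function of the remaining variables, and since $F^{i}$ is finite everywhere (it is sandwiched between $0$ and $F$), it is automatically continuous, so Berge-type machinery is not needed. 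In short, your argument proves a slightly stronger, fully rigorous version of claim (2) at the cost of a compactness-and-continuity detour that the paper elides.
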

\begin{proof}
  We start by proving the first claim, $\phi(z) > 0$ for all $z \in
  \mathcal{\hat Z}_{\epsilon}^{c}$. Suppose $\phi(z) = 0$; then, by
  Equation~\eqref{eq:rel-improvement-2}
  \begin{equation*}
    F^{i}(z) = F(z),
  \end{equation*}
  which means $z$ is block optimal; by
  Lemma~\ref{lem:basic-properties}, $z$ is, then, global optimal,
  which contradicts the fact that $z$ belongs to the set~$
  \mathcal{\hat Z}_{\epsilon}^{c}$.  The second claim follows by observing that
  $\phi$ is a sum of real valued functions and, thus, a real valued
  function, and that $\phi(z)$ is bounded below by zero in~$
  \mathcal{\hat Z}_{\epsilon}^{c}$ and so it has a positive infimum for~$
  \mathcal{\hat Z}_{\epsilon}^{c}$.
\end{proof}

\subsubsection{Theorems}
\label{sec:theorems}
Equipped with the previous Lemmas, we are now ready to prove the
Theorems stated in Section~\ref{sec:analysis-asynchr-algor}.
\begin{proof}[Proof of Theorem~\ref{th:asyn-convergence}]
  We denote the random variable corresponding to the outcome of the
  $t$-th loop step of Algorithm~\ref{alg:asyncronous} as~$Z^{t}$. The expected
  value of the expected improvement function~$\psi$ is
  \begin{IEEEeqnarray*}{rCl}
    \expect \left[ \psi\left(Z^{t}\right) \right] &=& 
        \expect \left[ \expect \left[F \left(Z^{t+1} \right) | 
            Z^{t} \right] \right] - 
        \expect\left[ F\left(Z^{t}\right) \right]\\
        &=& \expect \left [F \left( Z^{t+1} \right) \right] -
        \expect \left[F \left( Z^{t} \right) \right],
  \end{IEEEeqnarray*}
  where the second equality comes from the tower property documented,
  \eg, in Williams~\cite{Williams1991}. This expectation can also be
  written as
\begin{IEEEeqnarray*}{rCl}
  \expect \left[ \psi\left(Z^{t}\right) \right] &=&  \expect \left[ \psi \left(Z^{t} \right) | Z^{t} \in \mathcal{\hat Z}_{\epsilon}^{c} \right] \prob \left(Z^{t} \in \mathcal{\hat Z}_{\epsilon}^{c} \right) \\
&&+ \expect \left[ \psi \left(Z^{t} \right) | Z^{t} \not \in \mathcal{\hat Z}_{\epsilon}^{c} \right] \prob \left(Z^{t} \not \in \mathcal{\hat Z}_{\epsilon}^{c} \right)\\
&\leq& \expect \left[ \psi \left(Z^{t} \right) | Z^{t} \in \mathcal{\hat Z}_{\epsilon}^{c} \right] \prob \left(Z^{t} \in \mathcal{\hat Z}_{\epsilon}^{c} \right).
\end{IEEEeqnarray*}
By combining both we get
\begin{IEEEeqnarray*}{l}
  \expect \left [F \left( Z^{t+1} \right) \right] - \expect
  \left[F \left( Z^{t} \right) \right] \\
  \leq \expect \left[ \psi
    \left(Z^{t} \right) | Z^{t} \in \mathcal{\hat Z}_{\epsilon}^{c} \right] \prob
  \left(Z^{t} \in \mathcal{\hat Z}_{\epsilon}^{c} \right)
\end{IEEEeqnarray*}
which can be further bounded using
Lemma~\ref{lem:expected-improvement-properties} as
\begin{equation*}
  \expect \left [F \left( Z^{t+1} \right) \right] - \expect
  \left[F \left( Z^{t} \right) \right] \leq -a_{\epsilon} p_{t}
\end{equation*}
where $p_{t} = \prob \left(Z^{t} \in \mathcal{\hat Z}_{\epsilon}^{c} \right)$. By
expanding the recursion we obtain
\begin{equation*}
  \expect \left [F \left( Z^{t+1} \right) \right] \leq
  -a_{\epsilon}\sum_{k=1}^{t}p_{k} + F\left(z^{0}\right),
\end{equation*}
which provides a bound on the sum of probabilities~$p_{k}$ when
rearranged as
\begin{equation*}
  \sum_{k=1}^{t}p_{k} \leq \frac{F \left( z^{0} \right) - \expect \left [F \left( Z^{t+1} \right) \right]}{a_{\epsilon}}.
\end{equation*}
Taking $t$ up to infinity, we obtain
\begin{IEEEeqnarray*}{rCl}
  \sum_{k=1}^\infty p_k &\leq& \frac{F \left( z^{0} \right) - \expect \left [F \left( Z(\infty) \right) \right]}{a_\epsilon} \\
&\leq& \frac{F \left( z^{0} \right) - \hat{f}^\star}{a_\epsilon}.
\end{IEEEeqnarray*}
This means the infinite series of probabilities $p_{k}$ assumes a
finite value; by the Borel-Cantelli Lemma, we get
\begin{equation*}
  \prob \left( Z^{t} \in \mathcal{\hat Z}_{\epsilon}^{c}, \quad i.o. \right) = 0,
\end{equation*}
where~$i.o.$ stands for \emph{infinitely often}. This concludes the
proof, since this statement is equivalent to the first claim of
Theorem~\ref{th:asyn-convergence}.
\end{proof}

\begin{proof}[Proof of Theorem~\ref{th:nr-iter-convergence}]
  Consider redefining the sets in~\eqref{eq:X-epsilon}
  and~\eqref{eq:x-epsilon-c} as
  \begin{IEEEeqnarray*}{rCl}
    \mathcal{Y}_\epsilon &=& \left\{ z : F(z) < F^\star +
      \epsilon \right \}\\
    \mathcal{Y}_\epsilon^c &=& \left\{ z : F(z) \geq F^\star +
      \epsilon \right \},
  \end{IEEEeqnarray*}
  thus leading to
  \begin{equation*}
    \mathcal{\hat{Y}}_\epsilon^c = \mathcal{Y}_\epsilon^c \cap
    \mathcal{Z}_{\hat{f}}.
  \end{equation*}
Using the same arguments as in
Lemma~\ref{lem:expected-improvement-properties}, we can prove that
\begin{equation*}
  \inf_{z \in \hat Y_{\epsilon}^{c}} \phi (z) = b_{\epsilon} < \infty.
\end{equation*}
We now define a sequence of points $\tilde z ^{t}$ such that
\begin{equation*}
  \tilde z^{t} = 
  \begin{cases}
    z^{t} & \text{if } z^{t} \in \mathcal{\hat Y}_{\epsilon}^{c} \\
    z^{\star} & \text{otherwise}
  \end{cases},
\end{equation*}
and the sequence of real values
\begin{equation*}
  \psi(\tilde z^{t}) =
  \begin{cases}
    \psi(z^{t})  & \text{if } z^{t} \in \mathcal{\hat Y}_{\epsilon}^{c} \\
    0  & \text{otherwise}
  \end{cases}.
\end{equation*}
The expected value of $\psi(\tilde Z^{t})$ is
\begin{equation*}
  \expect \left[\psi \left(\tilde Z^{t} \right) \right] = \expect
  \left[ F \left(\tilde Z^{t+1} \right) \right] - \expect \left[
    F \left(\tilde Z^{t} \right) \right]. 
\end{equation*}
Summing these expectations over time, we get
\begin{equation*}
  \sum_{k=0}^{t-1} \expect \left[\psi \left(\tilde Z^{t} \right)
  \right] = \expect \left[ F \left( \tilde Z(t) \right) \right] -
  F \left( z^{0} \right). 
\end{equation*}
Taking $t$ to infinity and interchanging integration and summation we obtain
\begin{equation*}
  \expect \left[ \sum_{k=0}^{\infty} \psi\left( \tilde Z^{t} \right)
  \right] = \expect \left[ F \left( Z(\infty) \right) \right] -
  F\left( z^{0} \right).
\end{equation*}
From the definition of~$\psi(\tilde z^{t})$ we can write
\begin{equation*}
  \expect \left[ \sum_{k=0}^{\infty} \psi\left( \tilde Z^{t} \right)
  \right] \leq \expect \left[ K_{\epsilon}(-b_{\epsilon}) \right],
\end{equation*}
thus obtaining the result
\begin{IEEEeqnarray*}{rCl}
  \expect \left[ K_\epsilon \right] &\leq& \frac{F (z^{0}) -
    \expect \left[ F \left (Z(\infty)\right) \right]}{b_\epsilon}
  \\ 
  &\leq& \frac{F(z^{0}) - F^\star}{b_\epsilon}
\end{IEEEeqnarray*}
which is a finite number. This completes the proof.
\end{proof}

\bibliographystyle{IEEEtran}
\bibliography{biblos}

\end{document}